%% 
%% Copyright 2007-2020 Elsevier Ltd
%% 
%% This file is part of the 'Elsarticle Bundle'.
%% ---------------------------------------------
%% 
%% It may be distributed under the conditions of the LaTeX Project Public
%% License, either version 1.2 of this license or (at your option) any
%% later version.  The latest version of this license is in
%%    https://urldefense.proofpoint.com/v2/url?u=http-3A__www.latex-2Dproject.org_lppl.txt&d=DwIGAw&c=sJ6xIWYx-zLMB3EPkvcnVg&r=tAdHy-wvQFigGAva8CRY2KIzvqnAvVpKVS-9-T6Jqzo&m=u1XyzKR6M2KHWBe4yRZHgyW2LV68OR6bR9oK7pF1Gfw&s=yB4mQwi8J7yUazoNL4MTZq-B2VCtXoEU2DFznaXVSxU&e= 
%% and version 1.2 or later is part of all distributions of LaTeX
%% version 1999/12/01 or later.
%% 
%% The list of all files belonging to the 'Elsarticle Bundle' is
%% given in the file `manifest.txt'.
%% 

%% Template article for Elsevier's document class `elsarticle'
%% with numbered style bibliographic references
%% SP 2008/03/01
%%
%% 
%%
%% $Id: elsarticle-template-num.tex 190 2020-11-23 11:12:32Z rishi $
%%
%%
%\documentclass[preprint,12pt]{elsarticle}

%% Use the option review to obtain double line spacing
%% \documentclass[authoryear,preprint,review,12pt]{elsarticle}

%% Use the options 1p,twocolumn; 3p; 3p,twocolumn; 5p; or 5p,twocolumn
%% for a journal layout:
\documentclass[final,1p,times]{elsarticle}
%% \documentclass[final,1p,times,twocolumn]{elsarticle}
%%\documentclass[final,3p,times]{elsarticle}
%% \documentclass[final,3p,times,twocolumn]{elsarticle}
%% \documentclass[final,5p,times]{elsarticle}
%% \documentclass[final,5p,times,twocolumn]{elsarticle}

%% For including figures, graphicx.sty has been loaded in
%% elsarticle.cls. If you prefer to use the old commands
%% please give \usepackage{epsfig}

%% The amssymb package provides various useful mathematical symbols
\usepackage{amsmath,amssymb,amsthm}
\usepackage{enumerate}
\usepackage{geometry}
\usepackage{graphicx}
\usepackage{svg}
\usepackage{subfigure}
\usepackage{epsfig}
\usepackage{graphicx}
\usepackage{colordvi}
\usepackage{graphics}
\usepackage{float}
\usepackage{ mathrsfs }
\usepackage{dutchcal}
\usepackage{tikz}

\numberwithin{equation}{section}
\newtheorem{theorem}[equation]{Theorem}

\newtheorem{lemma}[equation]{Lemma}
\newtheorem{remark}[equation]{Remark}

\newtheorem{algorithm}{Algorithm}[section]

\newcommand{\dt}{\Delta t}
\newcommand{\unone}{u^{n+1}}

\newcommand{\utn}{\tilde{u}^n}
\newcommand{\enone}{e^{n+1}}

\newcommand{\etnone}{\tilde{e}^{n+1}}
\newcommand{\etn}{\tilde{e}^n}

\newcommand{\eps}{{\varepsilon}}

\def\div{\operatorname{div}}
 %{\operatorname{rot}}

 %% er...

%% The amsthm package provides extended theorem environments
%% \usepackage{amsthm}

%% The lineno packages adds line numbers. Start line numbering with
%% \begin{linenumbers}, end it with \end{linenumbers}. Or switch it on
%% for the whole article with \linenumbers.
%% \usepackage{lineno}

\journal{arXiv}

\begin{document}

\begin{frontmatter}

%% Title, authors and addresses

%% use the tnoteref command within \title for footnotes;
%% use the tnotetext command for theassociated footnote;
%% use the fnref command within \author or \address for footnotes;
%% use the fntext command for theassociated footnote;
%% use the corref command within \author for corresponding author footnotes;
%% use the cortext command for theassociated footnote;
%% use the ead command for the email address,
%% and the form \ead[url] for the home page:
%% \title{Title\tnoteref{label1}}
%% \tnotetext[label1]{}
%% \author{Name\corref{cor1}\fnref{label2}}
%% \ead{email address}
%% \ead[url]{home page}
%% \fntext[label2]{}
%% \cortext[cor1]{}
%% \affiliation{organization={},
%%             addressline={},
%%             city={},
%%             postcode={},
%%             state={},
%%             country={}}
%% \fntext[label3]{}

%% use optional labels to link authors explicitly to addresses:
%% \author[label1,label2]{}
%% \affiliation[label1]{organization={},
%%             addressline={},
%%             city={},
%%             postcode={},
%%             state={},
%%             country={}}
%%
%% \affiliation[label2]{organization={},
%%             addressline={},
%%             city={},
%%             postcode={},
%%             state={},
%%             country={}}

\title{Removing splitting/modeling error in projection/penalty methods for Navier-Stokes simulations with continuous data assimilation}
\author[label1]{Elizabeth Hawkins}
\ead{evhawki@clemson.edu}
\author[label1]{Leo G. Rebholz\corref{cor1}\fnref{lab1}}
\ead{rebholz@clemson.edu}
\author[label1]{Duygu Vargun\fnref{lab1}}
\ead{dvargun@clemson.edu}
\address[label1]{Department of Mathematical Sciences, Clemson University, Clemson, SC 29634, USA. }

\cortext[cor1]{Corresponding  author.}

\begin{abstract}
We study continuous data assimilation (CDA) applied to projection and penalty methods for the Navier-Stokes (NS) equations.  Penalty and projection methods are more efficient than consistent NS discretizations, however are less accurate due to modeling error (penalty) and splitting error (projection).  We show analytically and numerically that with measurement data and properly chosen parameters, 
CDA can effectively remove these splitting and modeling errors and provide long time optimally accurate solutions.
%Additionally, we derive an altered projection method that has a weighted $H^1$ projection into the div free space instead of the usual $L^2$ projection, in order to improve the effect of CDA in the projection step. 
%Numerically tests show the CDA-altered projection method performs significantly better than CDA-projection method on the flow past a block numerical test.
\end{abstract}

\begin{keyword}
Navier-Stokes equations, projection method, penalty method, continuous data assimilation
\end{keyword}

\end{frontmatter}

\section{Introduction}\label{introsection}
Data assimilation has become a critical tool to improve simulations of many physical phenomena, from climate science to weather prediction to environmental forecasting and beyond  \cite{Daley_1993_atmospheric_book,Kalnay_2003_DA_book,Law_Stuart_Zygalakis_2015_book}.  While there are many types of data assimilation, one with perhaps the strongest mathematical foundation for use with PDEs that predict physical behavior is called Continuous Data Assimilation (CDA).  CDA was developed by Azouani, Olson, and Titi in 2014 \cite{AOT14}, and has since been successfully used on a wide variety of problems including Navier-Stokes equations \cite{AOT14}, Benard convection \cite{FJT15}, planetary geostrophic models \cite{FLT16}, turbulence \cite{DMB20,FLT19}, Cahn-Hilliard \cite{DR22} and many others.  Many improvements to CDA itself have also been made, through techniques for parameter recovery \cite{CHL20}, sensitivity analysis with CDA \cite{CL21}, numerical analysis \cite{IMT20,LRZ19,RZ21,DR22,GNT18}, and efficient nudging methods \cite{RZ21}, to name just a few.

CDA is typically applied in the following manner.  Suppose the following PDE is the correct model for a particular physical phenomenon with solution $u(x,t)$:
\begin{align*}
u_t + F(u) &= f, \\
u(x,t)|_{\partial\Omega} & =0,\\
u(x,0)&=u_0(x).
\end{align*}
Suppose further that part of the true solution is known from measurements or observables, so that $I_H(u)$ is known at all times, with $I_H$ representing an appropriate interpolant with max point spacing $H$.  Then the CDA model takes the form
\begin{align*}
v_t + F(v) + \mu I_H(v-u) & = f,\\
v(x,t)|_{\partial\Omega} &=0,\\
v(x,0)&=v_0(x),
\end{align*}
where $\mu>0$ is a user selected nudging parameter.  For many such systems, given enough measurement values it can be proven that the solution $v$ is long time accurate regardless of the accuracy of the initial condition $v_0$ (often CDA analyses assume $v_0=0\ne u_0$).  In numerical analyses, accuracy results of CDA enhanced discretizations can often avoid error growth in time since application of the Gronwall inequality can be avoided, leading to long time optimal accuracy results \cite{GNT18,RZ21,GN20}.

The purpose of this paper is to study CDA together with two commonly used discretizations of the Navier-Stokes equations (NSE), the projection method and the penalty method.
The projection method is a classical splitting method for the NSE developed independently by Chorin and Temam \cite{T69,C68}, and is based on a Hodge decomposition.  The penalty method removes the divergence constraint but replaces it with a divergence penalty in the momentum equation.  Both of these methods are more efficient than consistent discretizations, however they are not as accurate: projection methods have splitting error that reduces accuracy below optimal, and penalty methods have a consistency error on the order of the penalty coefficient.  We will show through analysis and numerical tests that CDA removes the splitting error in projection method and consistency error in penalty method.

To begin our introductory explanation, we start with the NSE system, which is given by
\begin{align}
		w_t+w\cdot \nabla w +\nabla q-\nu \Delta w &= f, \label{nse1} \\
		\nabla\cdot w&=0, \label{nse2} \\
		w|_{\partial\Omega}&=0, \label{nse3} \\
		w(0)&=w_0, \label{nse4}
\end{align} 
where $f$ represents external forcing, $\nu$ the kinematic viscosity, and with $w$ and $q$ representing the unknown velocity and pressure.  A consistent linearized backward Euler  temporal discretization takes the form
\begin{equation}\label{BElinNSE}
	\begin{aligned}
\frac{u^{n+1} - u^n}{\Delta t} + u^{n}\cdot \nabla u^{n+1} + \nabla p^{n+1} - \nu\Delta u^{n+1} & = f^{n+1},\\
\nabla \cdot u^{n+1} &=0, \\
u^{n+1}|_{\partial\Omega}&=0.
	\end{aligned}
\end{equation}
For simplicity, we consider the linearized backward Euler time stepping for our analysis, but we note that the same ideas can be applied to the analogous BDF2-type methods as well (e.g. those from \cite{GMS06}), although with additional technical details.  Our numerical tests use both time backward Euler and BDF2.

The linear systems associated with coupled discretizations such as those arising from \eqref{BElinNSE}, which are often called nonsymmetric saddle point systems, can be very difficult to solve.  While significant progress has been made in recent years \cite{benzi,CLLRW13,FMSW21,ESW14}, solving these systems when $\nu$ is small can be slow and sometimes not completely robust.  Projection and penalty methods both avoid the need to solve such linear systems, as we see below, and thus with these methods it is typically much easier to `get numbers'.

The linearized backward Euler projection method is formulated as the following two step solve process: \\

Proj Step 1: Find $u^{n+1}$:
\begin{align*}
\frac{u^{n+1} - \tilde u^n}{\Delta t} + \tilde u^{n}\cdot\nabla u^{n+1} - \nu\Delta u^{n+1} & = f^{n+1},\\
u^{n+1}|_{\partial\Omega}&=0.
\end{align*}

Proj Step 2: Project $u^{n+1}$ into the divergence-free space
\begin{align*}
\frac{\tilde u^{n+1} - u^{n+1}}{\Delta t} + \nabla p^{n+1} &=0, \\
\nabla \cdot \tilde u^{n+1} &=0,\\
\tilde u^{n+1} \cdot n|_{\partial\Omega}&=0.
\end{align*}

The projection method is much more efficient and robust than solving the saddle point system above.  Proj Step 1 is a convection-diffusion solve, and while not simple when $\nu$ is small it is still well studied.  Proj Step 2 is the same at each time step and symmetric, but also can be formulated as a pressure Poisson problem.  Hence solving the linear systems is a much simpler process with the projection method.  However, there are downsides to projection methods, including reduced accuracy and solutions that are not completely physical (either not divergence-free, or do not satisfy the boundary conditions).  Since their development in the late 1960s there have been many improvements to projection methods \cite{P97,GMS06,GMS12,LNRW17,BCM01}, but still there is a trade-off of accuracy vs. efficiency.   This lack of accuracy is evidenced in numerous ways.  First, as mentioned in \cite{S92}, while the coupled backward Euler method \eqref{BElinNSE} has $O(\Delta t)$ velocity accuracy  in the $L^2(0,T;H^1)$ natural energy norm, the projection method above cannot attain first order accuracy in this norm.  While it can achieve first order temporal accuracy in other norms, additional restrictions on the domain (e.g. $\Omega$ has the $H^2$ elliptic regularity property) are required that are not required for first order accuracy of the coupled scheme.  %Additionally, the pressure accuracy in projection methods is also suboptimal.

In addition to projection methods, we also consider penalty methods in this paper.  The linearized backward Euler penalty method takes the following form: 

\begin{align*}
\frac{u^{n+1} - u^n}{\Delta t} 
+  \tilde B(u^n, u^{n+1})
%+ u^{n}\cdot\nabla u^{n+1} 
- \nu\Delta u^{n+1} + \nabla p^{n+1} & = f^{n+1},\\
\nabla \cdot u^{n+1} + \eps p^{n+1} &=0,\\
u^{n+1}|_{\partial\Omega}&=0,
\end{align*}
where $\tilde B(u,v)=(u\cdot \nabla)v+\frac{1}{2}(\div u)v$ is the modified bilinear form introduced by Temam \cite{T68} to guarantee the stability of such systems.

By solving for $p^{n+1}$ in the conservation of mass equation and inserting it into the momentum equation, we get a system in terms of velocity only:
\begin{align*}
\frac{u^{n+1} - u^n}{\Delta t} + \tilde B(u^n, u^{n+1}) - \nu\Delta u^{n+1} -\eps^{-1} \nabla \div u^{n+1} & = f^{n+1},\\
u^{n+1}|_{\partial\Omega}&=0.
\end{align*}
Hence this system is also more efficient than the consistent discretization of \eqref{BElinNSE}, but carries a $O(\eps)$ consistency error \cite{S95}.  Since the matrix arising from the `grad-div' term
$ -\eps^{-1} \nabla (\nabla \cdot u^{n+1}) $ is singular, numerical issues arise if $\eps$ is taken too small and often this consistency/modeling error can be a dominant error source \cite{OR11}. For $\eps$ not too small, however, linear system solves are quite efficient, even with direct solvers \cite{OR11}.

The purpose of this paper is to improve both projection and penalty methods by incorporating CDA into their respective schemes. The CDA enhanced Proj scheme uses nudging in Proj Step 1, and is given by

CDA Proj Step 1: Find $u^{n+1}$:
\begin{align*}
\frac{u^{n+1} - \tilde u^n}{\Delta t} + \tilde u^{n}\cdot\nabla u^{n+1} - \nu\Delta u^{n+1}  + \mu  I_H(u^{n+1} - w^{n+1}) & = f^{n+1},\\
u^{n+1}|_{\partial\Omega}&=0.
\end{align*}

CDA Proj Step 2: Project $u^{n+1}$ into the divergence-free space
\begin{align*}
\frac{\tilde u^{n+1} - u^{n+1}}{\Delta t} + \nabla p^{n+1}  &=0, \\
\nabla \cdot \tilde u^{n+1} &=0,\\
\tilde u^{n+1} \cdot n|_{\partial\Omega}&=0.
\end{align*}
Nudging could also be applied to velocity projection in Proj Step 2, it does not make any significant change in analysis or numerical results.
%In our computations, we see little difference when taking $\mu_2>0$, and so for simplicity we restrict our analysis to the case of $\mu_2=0$ but still show results of $\mu_2>0$ in our numerical tests.  

The CDA enhanced penalty method, in velocity-only form, can be written as 
\begin{equation}\label{penalyCDA}
	\begin{aligned}
	\frac{u^{n+1} - u^n}{\Delta t} + \tilde B(u^n, u^{n+1})
%+  u^{n}\cdot\nabla u^{n+1} + \frac{1}{2}(\div u^{n})  u^{n+1}
- \nu\Delta u^{n+1} -\eps^{-1}\nabla\div u^{n+1}  
	 +\mu I_H(u^{n+1}-w^{n+1}) & = f^{n+1},\\
		u^{n+1}|_{\partial\Omega}&=0.
	\end{aligned}
\end{equation}

We prove that under certain parameter choices found in our analysis, with CDA both projection and penalty methods recover optimal accuracy in the $L^2(0,T;H^1)$ energy norm and yield long time accuracy in $L^2(\Omega)$. Our numerical tests illustrate these results for both the first order schemes we analyze, and for their BDF2 analogues with similar improvement from CDA.  Interestingly, our numerical tests show that CDA-Penalty appears to give better numerical results than CDA-Projection, and moreover CDA-penalty can be very accurate even with 
$\eps=1$.

This paper is organized as follows. In section \ref{prelimsection}, we introduce the necessary notation and preliminary results required in the following sections. In section \ref{errorsection}, we establish stability and convergence analysis of the CDA-Projection method globally in time in $L^2$ and prove the rate of convergence of our scheme is $O(\dt)$ in $L^2(0,T;H^1)$. In section \ref{penaltyDAerrorsection}, we study the convergence of the CDA-Penalty scheme. Lastly, section \ref{experimentsection} contains two numerical tests that illustrate the optimal convergence rates and efficiency of the CDA-Projection and CDA-Penalty methods, respectively, on the benchmark problem of the channel flow past a cylinder.

\section{Notation and Preliminaries}\label{prelimsection}
We consider $\Omega\subset \mathbb{R}^d$, $d=2,3$, to be open bounded Lipschitz domain. The $L^2(\Omega)$ norm and inner product will be denoted by $\|\cdot\|$ and $(\cdot,\cdot)$ respectively, while all other norms will be labeled with subscripts. 
Additionally, $\langle\cdot,\cdot\rangle$ is used to denote the duality pairing between $H^{-s}$ and $H^s_0(\Omega)$ for all $s>0$.

We denote the natural function spaces for velocity and pressure, respectively, by
\begin{align*}
X &\coloneqq H^1_0(\Omega)^d,\\
Q &\coloneqq L^2_0(\Omega),
\end{align*}
which satisfy the inf-sup stability condition given by
\begin{align*}
\inf_{q\in Q} \sup_{v\in X} \frac{(q,\nabla\cdot v)}{\|q\| \|\nabla v\|} \geq \beta >0.
\end{align*}
The dual norm of X will be denoted by $\|\cdot\|_{-1}$. In addition to the spaces $X$ and $Q$, we define
\begin{align*}
Y&=\{ u\in(L^2(\Omega))^d: \nabla\cdot u=0,\ u\cdot n|_{\partial\Omega}=0 \},\\
V&=\{ u\in X: \nabla\cdot u=0 \},
\end{align*}
and $P_Y$ is the orthogonal projector in $(L^2(\Omega))^d$ onto $Y$.  The tilde notation will be used to denote this operator, e.g.
$$\tilde u^{n+1}= P_Y u^{n+1}.$$
The Stokes operator is defined by
\begin{align*}
A u=-P_Y \Delta u,\ \forall u\in D(A)=V \cap (H^2(\Omega))^d,
\end{align*}
which is an unbounded positive self-adjoint closed operator in $Y$ with domain $D(A)$, and its inverse $A^{-1}$ is compact in $Y$.

Given $u\in Y$, by definition of $A$, $v=A^{-1} u$ is the solution of the following Stokes equations:
\begin{equation}\label{stokes}
\begin{aligned}
-\Delta v + \nabla p &= u,\\
\nabla\cdot v&=0,\\
v|_{\partial\Omega}&=0.
\end{aligned}
\end{equation}
In \cite{Shen92}, the regularity results for \eqref{stokes} give
\begin{align*}
\|A^{-1} u\|_{H^s} = \|v\|_{H^s} \leq c_1 \|u\|_{H^{s-2}},\ \text{for}\ s=1,2;\ \text{and}\\
(A^{-1}u,u)=(v,u)=-(\Delta v,v)+(\nabla p, v)=\|v\|^2\leq c_1^2 \|u\|^2_{-1}.
\end{align*}
Additionally, since $u=Av$, by the inf-sup condition we obtain
\begin{align*}
\|u\|_{-1}\leq c \sup_{w\in X} \frac{\langle u,w\rangle}{\|w\|} = c \sup_{w\in X} \frac{\langle Av ,w\rangle}{\|w\|} \leq c\|v\|,
\end{align*}
which implies that $(A^{-1}u,u)^{1/2}$ can be used as an equivalent norm of $H^{-1}$ for all $u\in Y.$

We now to introduce some operators which will be used in our analysis.  For $u,v,w\in X,$
	\begin{align*}
		\tilde B(u,v)& =(u\cdot\nabla)v+\frac{1}{2}(\div u)v,\\
		\tilde b(u,v,w)& =(\tilde B(u,v),w).
	\end{align*} 
	Equivalently, this last term can be written as
	\begin{align*}
		\tilde b(u,v,w)=\frac{1}{2}\left(  
		((u\cdot\nabla)v,w)-((u\cdot\nabla)w,v)
		\right),\ \forall u,v,w\in X,
	\end{align*} 
	which is the skew symmetric form of the nonlinear term. Hence,
	\begin{align}\label{tildebilinear}
		\tilde b(u,v,v)=0,\ \forall u,v\in X.
	\end{align}

The following lemma is proven in \cite{LRZ19} and used to obtain the long time accuracy result.
\begin{lemma}\label{series}
	For constant $\alpha >1$ and $B>0$ if a sequence of real numbers $\{x_n\}_{n=0}^\infty$ satisfies
	$$\alpha x_{n+1} \leq x_n +B$$ 
	then
	$$ x_{n+1} \leq x_0 \frac{1}{\alpha^{n+1}} + \frac{B}{\alpha-1}$$
\end{lemma}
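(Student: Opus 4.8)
The plan is to treat this as a standard first-order linear recurrence inequality and resolve it by iteration (equivalently, induction on $n$). Since $\alpha>1>0$, I would first divide the hypothesis through by $\alpha$ to put it in the contraction form
\begin{equation*}
x_{n+1}\leq \frac{1}{\alpha}\,x_n + \frac{B}{\alpha}.
\end{equation*}
The coefficient $1/\alpha<1$ is what makes the argument work, and this is where the assumption $\alpha>1$ is essential.

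Next I would unroll this inequality repeatedly, substituting the bound for $x_n$ in terms of $x_{n-1}$, and so on down to $x_0$. After $n+1$ applications one obtains a purely geometric accumulation of the forcing term $B/\alpha$:
\begin{equation*}
x_{n+1}\leq \frac{1}{\alpha^{n+1}}\,x_0 + B\sum_{k=1}^{n+1}\frac{1}{\alpha^{k}}.
\end{equation*}
To make this rigorous rather than a ``$\cdots$'' computation, the cleanest route is induction on $n$: the base case $n=0$ is immediate from the divided hypothesis, and in the inductive step one inserts the assumed bound for $x_n$ into $x_{n+1}\leq \alpha^{-1}x_n + \alpha^{-1}B$ and checks that the geometric sum advances by one index.

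Finally I would bound the finite geometric series by its infinite counterpart. Since $0<1/\alpha<1$,
\begin{equation*}
\sum_{k=1}^{n+1}\frac{1}{\alpha^{k}}\leq \sum_{k=1}^{\infty}\frac{1}{\alpha^{k}}=\frac{1/\alpha}{1-1/\alpha}=\frac{1}{\alpha-1},
\end{equation*}
which immediately yields the claimed estimate $x_{n+1}\leq \alpha^{-(n+1)}x_0 + B/(\alpha-1)$. There is no real obstacle here; the only point requiring a moment of care is the geometric-series bound, specifically noting that dropping the negative tail $-\alpha^{-(n+1)}/(\alpha-1)$ is valid and gives the stated $n$-independent constant $B/(\alpha-1)$, which is precisely the feature that later prevents error growth in time.
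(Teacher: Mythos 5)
Your argument is correct: dividing by $\alpha>0$, iterating (or inducting) down to $x_0$, and bounding the finite geometric sum by $\sum_{k=1}^{\infty}\alpha^{-k}=\frac{1}{\alpha-1}$ gives exactly the claimed estimate, and the coefficient $1/\alpha<1$ is indeed the only place $\alpha>1$ is used. The paper itself gives no proof of this lemma --- it simply cites an external reference --- and your induction-plus-geometric-series argument is the standard proof found there, so there is nothing further to compare.
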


\subsection{Discretization preliminaries}
A function space for measurement data interpolation is also needed. Hence we require a regular conforming mesh $\tau_H$ and define  $X_H = P_r(\tau_H)^2$ for some polynomial degree $r$.  We require that the coarse mesh interpolation operator $I_H$ used for data assimilation satisfies the following bounds:  for any $\phi \in X$,
\begin{align}
	\| I_H  (\phi) - \phi \| &\leq C_I H \| \nabla \phi\|, \label{interp1}
	\\ \| I_H(\phi) \| &\leq  C_{I} \|\phi \|. \label{interp2}
\end{align}

%The operator $A_{\eps}$ is positive self-adjoint from $H^2(\Omega)\cap H^1_0(\Omega)$ onto $L^2(\Omega)$ and that the powers $A^{\alpha}$ of $A(\alpha\in \mathbb{R})$ are well-defined.
%
%\begin{lemma}
%There exist a constant $C>0$ such that for $\eps$ sufficiently small, we have
%\begin{align}
%\|\Delta u\|&\leq C\|A_{\eps} u\|,\ \forall u\in H^{2}\cap H^1_0(\Omega),\label{penaltyoperator1}\\
%\|\nabla u\|&\leq C\|A_{\eps}^{1/2} u\|,\ \forall u\in H^1_0(\Omega),\label{penaltyoperator2}\\
%\|A_{\eps}^{-1} u\|&\leq C\|u\|_{-2},\ \forall u\in H^{-2}(\Omega),\label{penaltyoperator3}
%\end{align}
%\end{lemma}

%\begin{align*}
%&B(u,v)=(u\cdot\nabla)v,\ \tilde B(u,v)=(u\cdot\nabla)v+\frac{1}{2}(\div u)v\\
%&b(u,v,w)=(B(u,v),w),\ \tilde b(u,v,w)=(\tilde B(u,v),w)
%\end{align*}

\section{CDA-Projection method error analysis}\label{errorsection}

We now consider the error resulting from the CDA-Projection method, which we write as the following semi-discrete algorithm.  While this is only semi-discrete, no additional difficulties would arise from a finite element spatial discretization (other than accuracy being limited {\color{black}by} the spatial approximation accuracy) and thus we suppress the spatial discretization.

\begin{algorithm} \label{daproj}
Let $w$ be the solution of \eqref{nse1}-\eqref{nse2} for a given divergence-free $w^0\in X$ and forcing $f\in L^{\infty}(0,\infty;H^{-1}(\Omega))$.  The for $u^0=w^0$ and given nudging parameter $\mu \ge 0$, find $\{ u^{n},\ \tilde u^n \}$ for n=1,2,3,... via the time stepping algorithm:

CDA Proj Step 1: Find $u^{n+1}\in X$ satisfying
\begin{equation}
\frac{1}{\dt} \left( \unone - \utn,v \right)+ (\utn\cdot\nabla\unone,v) + \nu (\nabla \unone,\nabla v) + \mu (I_H(u^{n+1} - w^{n+1}),v) = (f^{n+1},v) \ \forall v\in X. \label{pdastep1} \\
\end{equation}			

CDA Proj Step 2: Find $\tilde u^{n+1}\in Y$ and $p^{n+1}\in Q$ satisfying
\begin{align}
\frac{1}{\dt} \left(\tilde u^{n+1} - u^{n+1},v \right)- (p^{n+1},\nabla \cdot v)  & = 0  \ \forall v\in Y, \label{pdastep2a} \\
(\nabla \cdot \tilde u^{n+1},q) & =0. \ \forall q\in Q. \label{step2b}
\end{align}		
\end{algorithm}
\begin{remark}
Although the CDA-Projection method algorithm applies nudging to Step 1 only, it could also be applied to Step 2.  However, the resulting analysis requires more effort but without any improvement in the result, and moreover numerical tests (omitted herein) showed no significant improvement over nudging with Step 1 only.  
%Typically, CDA is applied to equations with a Laplacian term, and this term is often critical for CDA to yield improvement in accuracy.  
%In section ???, we consider an altered projection method where a Laplacian term is present in the projection step, and the CDA-Altered-Projection method performs better when nudging both Steps 1 and 2.
\end{remark}

We first prove that Algorithm \ref{daproj} is long time stable, without any restriction on the time step size $\dt$.
\begin{lemma}\label{lemma:wellposedness}
	Let $f\in L^{\infty}(0,\infty;L^2)$ and  $w\in L^{\infty}(0,\infty;L^2)$. Then, for any $\dt>0$ and any integer $n>0$, the velocity solution to Algorithm \ref{daproj} satisfies
\begin{align}\label{stability}
	\|u^{n}\|^2 
	\leq
	\| u^{0}\|^2  \frac{1}{\left(1+\dt(\mu+\lambda C_P^{-2})\right)^{n}  } 
	+\frac{\nu^{-1}}{\mu+\lambda C_P^{-2}} \|f\|_{L^{\infty}(0,\infty;L^2)}^2  
	+\frac{2\mu  }{\mu+\lambda C_P^{-2}} \|w\|_{L^{\infty}(0,\infty;L^2)}^2,
\end{align}	
with $\mu H^2 < \frac{\nu}{2 C_I^2}$.
\end{lemma}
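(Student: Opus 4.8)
The plan is to derive a one-step energy recursion of the form $\alpha\|\unone\|^2 \le \|\un\|^2 + B$ with a genuine amplification factor $\alpha>1$, and then invoke Lemma~\ref{series}; the decisive structural feature is that nudging supplies enough coercivity that no Gronwall argument (and hence no exponential-in-time growth) is needed. First I would test \eqref{pdastep1} with $v=\unone$. The trilinear term $(\utn\cdot\nabla\unone,\unone)$ vanishes: the projection defines $\utn\in Y$, so it is divergence free with $\utn\cdot n=0$ on $\partial\Omega$, and the computation behind \eqref{tildebilinear} applies. The discrete time-derivative term is rewritten by the polarization identity as $\tfrac{1}{2\dt}\big(\|\unone\|^2-\|\utn\|^2+\|\unone-\utn\|^2\big)$, and I would discard the nonnegative $\|\unone-\utn\|^2$. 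To connect $\|\utn\|$ to $\|\un\|$ I would test \eqref{pdastep2a} with $v\in Y$: since such $v$ are divergence free the pressure term drops, showing $\utnone=P_Y\unone$ is the $L^2$-orthogonal projection of $\unone$ onto $Y$, whence $\|\utnone\|\le\|\unone\|$ and thus $\|\utn\|^2\le\|\un\|^2$.

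The heart of the estimate is the nudging term. Splitting $I_H(\unone-w^{n+1})=\big(I_H\unone-\unone\big)+\unone-I_H w^{n+1}$ and pairing with $\unone$ produces the coercive contribution $\mu\|\unone\|^2$ together with two error terms. The interpolation error $\mu\big(I_H\unone-\unone,\unone\big)$ I would bound using \eqref{interp1} and Young's inequality by $\tfrac{\mu C_I^2H^2}{2}\|\nabla\unone\|^2+\tfrac{\mu}{2}\|\unone\|^2$; here the hypothesis $\mu H^2<\frac{\nu}{2C_I^2}$ is exactly what guarantees $\tfrac{\mu C_I^2H^2}{2}<\tfrac{\nu}{4}$, so this gradient contribution is absorbed by $\nu\|\nabla\unone\|^2$ while a coercive $\tfrac{\mu}{2}\|\unone\|^2$ survives. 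The data term $\mu(I_Hw^{n+1},\unone)$ I would control with \eqref{interp2} and Young's inequality, yielding the right-hand contribution $2\mu\|w^{n+1}\|^2$, and the forcing $(f^{n+1},\unone)$ by Cauchy--Schwarz together with Young/Poincar\'e, producing $\nu^{-1}\|f^{n+1}\|^2$ on the right while spending part of the dissipation. Collecting everything leaves a strictly positive residual $\lambda\|\nabla\unone\|^2$ of the viscous term, which I would convert by the Poincar\'e inequality $\|\nabla\unone\|^2\ge C_P^{-2}\|\unone\|^2$; after multiplying through by $2\dt$ this gives $\big(1+\dt(\mu+\lambda C_P^{-2})\big)\|\unone\|^2\le\|\un\|^2+\dt\big(\nu^{-1}\|f^{n+1}\|^2+2\mu\|w^{n+1}\|^2\big)$.

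Finally I would set $x_n=\|\un\|^2$, $\alpha=1+\dt(\mu+\lambda C_P^{-2})>1$, and $B=\dt\big(\nu^{-1}\|f\|_{L^\infty(0,\infty;L^2)}^2+2\mu\|w\|_{L^\infty(0,\infty;L^2)}^2\big)$, replacing $\|f^{n+1}\|$ and $\|w^{n+1}\|$ by their time-uniform sup norms so that $B$ is independent of $n$. Lemma~\ref{series} then yields $x_{n}\le x_0\alpha^{-n}+\tfrac{B}{\alpha-1}$, which is precisely \eqref{stability} upon noting $\alpha-1=\dt(\mu+\lambda C_P^{-2})$. I expect the nudging term to be the main obstacle: one must split the interpolant so that the coercive $\mu\|\unone\|^2$ emerges cleanly while the leftover interpolation error remains absorbable by viscosity under the stated smallness condition on $\mu H^2$, and it is exactly this interplay that forces the amplification factor strictly above one and thereby delivers the uniform-in-time, Gronwall-free bound. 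By contrast, the projection step enters only through the elementary fact that an $L^2$ projection cannot increase the norm, so it never degrades the estimate.
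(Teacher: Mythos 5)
Your proposal is correct and follows essentially the same route as the paper's proof: test Step 1 with $v=u^{n+1}$ so the convective term vanishes, use the $L^2$-projection property of Step 2 to get $\|\tilde u^{n+1}\|\le\|u^{n+1}\|$, split the nudging term by adding and subtracting $u^{n+1}$, absorb the interpolation error into the viscous term under $\mu H^2<\nu/(2C_I^2)$, apply Poincar\'e to form the amplification factor $1+\dt(\mu+\lambda C_P^{-2})$, and conclude with Lemma~\ref{series}. The only differences are cosmetic bookkeeping in the Young's-inequality constants, which affect the precise value of $\lambda$ but not the structure or validity of the bound.
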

\begin{proof}
Choose $v=\tilde u^{n+1}$ and $q=p^{n+1}$ in \eqref{pdastep2a}-\eqref{step2b}, which vanishes the pressure term, and gives
\begin{align*}
\|\tilde u^{n+1} \|^2 = \left( u^{n+1}, \tilde u^{n+1}\right).
\end{align*}		
Then by the Cauchy-Schwarz inequality, we obtain 
\begin{align}\label{boundontilde}
	\|\tilde u^{n+1} \| \leq \|\tilde u^{n+1}\|.
\end{align}
Next, choose $v=u^{n+1}$ in \eqref{pdastep1} which vanishes the nonlinear term, and provides us with
\begin{align*}
	\frac{1}{2\dt} \left(\|u^{n+1}\|^2 - \|\tilde u^{n}\|^2 +\|u^{n+1} - \tilde u^{n}\|^2 \right)+ \nu \|\nabla  u^{n+1}\|^2 + \mu (I_H(u^{n+1} - w^{n+1}), u^{n+1}) = (f^{n+1}, u^{n+1}). 
\end{align*}	
Next, we add and subtract $u^{n+1}$ in the first component of the nudging term and multiply both sides by $2\dt$, which yields 
\begin{align*}
	\|u^{n+1}\|^2 
	+ 2\dt \nu &\|\nabla  u^{n+1}\|^2 
	+ 2\dt\mu\|u^{n+1}\|^2
	\\
	&=
	\|\tilde u^{n}\|^2
	+2\dt (f^{n+1}, u^{n+1}) 
	+2\dt \mu (I_H( w^{n+1}), u^{n+1}) 
	-2\dt \mu (I_H(u^{n+1} ) - u^{n+1}, u^{n+1}) ,
\end{align*}	
after dropping the positive term $\|u^{n+1} - \tilde u^{n}\|^2$ on the left hand side.

The first term on the right hand side is bounded using the dual norm of $X$ and Young’s inequality, which yields
 \begin{align*}
 	2\dt (f^{n+1}, u^{n+1}) \leq \dt \nu^{-1} \|f^{n+1}\|^2 + \dt \nu \|\nabla u^{n+1}\|^2.
 \end{align*}	
Then, for the interpolation terms, we use Cauchy-Schwarz and Young's inequalities to obtain
\begin{align*}
2\dt \mu (I_H( w^{n+1}), u^{n+1})\leq 2\dt\mu \| w^{n+1}\|^2 + \dt\frac{\mu}{2} \|u^{n+1}\|^2,
\end{align*}
thanks to the interpolation property \eqref{interp1}, and 
 \begin{align*}
|-2\dt \mu (I_H(u^{n+1} ) - u^{n+1}, u^{n+1})|\leq 2\dt \mu C_I^2 H^2 \|\nabla u^{n+1}\|^2 + \dt \frac{\mu}{2} \|u^{n+1}\|^2,
 \end{align*}
thanks to the interpolation property \eqref{interp2}.

Combining the above estimates produces the bound
\begin{align*}
	\|u^{n+1}\|^2 
	+ \dt \left(\nu -2 \mu C_I^2 H^2\right)\|\nabla  u^{n+1}\|^2 
	+ \dt \mu \|u^{n+1}\|^2
	\leq
	\|\tilde u^{n}\|^2
	+\dt \nu^{-1} \|f^{n+1}\|^2  
	+2\dt\mu \| w^{n+1}\|^2. 
\end{align*}	
Assuming $\lambda=\nu -2 \mu C_I^2 H^2 >0$ and applying the Poincar\'e inequality on the left hand side gives
\begin{align*}
	\left(1+\dt(\mu+\lambda C_P^{-2})  \right)\|u^{n+1}\|^2 
	\leq
	\| u^{n}\|^2
	+\dt \nu^{-1} \|f^{n+1}\|^2  
	+2\dt\mu \| w^{n+1}\|^2,
\end{align*}	
thanks to the \eqref{boundontilde}. 
Next, we apply Lemma \ref{series} which reveals \eqref{stability} with regularity assumptions on $f$ and true solution $w$.
\end{proof}

\begin{lemma}
Under the same assumptions as the previous lemma, Algorithm \ref{daproj} is well-posed.
\end{lemma}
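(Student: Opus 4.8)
The plan is to establish existence and uniqueness of the iterates $\{u^{n+1},\tilde u^{n+1},p^{n+1}\}$ by induction on $n$, treating the two substeps of Algorithm \ref{daproj} separately at each time level. The base case is supplied by the initialization $\tilde u^0 = u^0 = w^0 \in X$, which is divergence-free and hence lies in $Y$ with $H^1$ regularity.

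For the inductive step I would first handle CDA Proj Step 1 \eqref{pdastep1}, which, for a fixed known $\tilde u^n$, is a linear elliptic problem for $u^{n+1}\in X$ governed by the bilinear form
$$a(u,v) = \tfrac{1}{\dt}(u,v) + (\tilde u^n\cdot\nabla u,v) + \nu(\nabla u,\nabla v) + \mu(I_H u,v).$$
Coercivity is essentially the computation already carried out in Lemma \ref{lemma:wellposedness}: taking $v=u$, the convection term $(\tilde u^n\cdot\nabla u,u)$ vanishes because $\tilde u^n\in Y$ is divergence-free with zero normal trace, while the nudging term is split as $\mu\|u\|^2 + \mu(I_H u - u,u)$ and the latter absorbed into the viscous term using \eqref{interp1}, \eqref{interp2} and the standing assumption $\mu H^2 < \nu/(2C_I^2)$. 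This yields $a(u,u)\geq \alpha\|u\|_X^2$ for some $\alpha>0$. Combined with boundedness of $a$ on $X\times X$, Lax--Milgram then gives a unique $u^{n+1}\in X$.

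Next I would treat CDA Proj Step 2 \eqref{pdastep2a}--\eqref{step2b}, which for the known $u^{n+1}\in X$ is a Stokes-type saddle-point problem for $(\tilde u^{n+1},p^{n+1})$. Its velocity bilinear form $\tfrac{1}{\dt}(\cdot,\cdot)$ is coercive and, together with the inf-sup condition recorded in Section \ref{prelimsection}, the Brezzi conditions hold, so there is a unique $\tilde u^{n+1}\in Y$ and $p^{n+1}\in Q$; indeed this step is exactly the Leray projection $\tilde u^{n+1}=P_Y u^{n+1}$. Closing the induction then requires checking that $\tilde u^{n+1}$ carries enough regularity to serve as the convection coefficient in the next Step 1.

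The main obstacle is precisely this last point: boundedness of the trilinear term $(\tilde u^n\cdot\nabla u,v)$ on $X\times X$ requires $\tilde u^n$ to be slightly better than $L^2$ (for instance $L^3$ when $d=3$, via H\"older together with the Sobolev embedding $X\hookrightarrow L^6$), whereas membership in $Y$ only encodes $L^2$ regularity. I expect to resolve this by invoking the $L^p$-boundedness of the Leray projector $P_Y$ on the Lipschitz domain $\Omega$, so that $u^{n+1}\in X\hookrightarrow L^p$ transfers to $\tilde u^{n+1}=P_Y u^{n+1}\in L^p$ for $p$ in the admissible range, which suffices to bound the convection term at the following level; the base case already enjoys the needed regularity since $w^0\in X$.
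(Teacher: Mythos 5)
Your proof is correct and follows essentially the same route as the paper, which simply observes that each time step is a linear Oseen-type problem with a nudging term whose coercivity was already handled in the stability lemma, and invokes standard theory. Your version is considerably more detailed --- in particular the induction structure, the explicit Lax--Milgram/Brezzi split between the two substeps, and the appeal to $L^p$-boundedness of the Leray projector to ensure $\tilde u^n$ is regular enough to serve as a convection coefficient are all points the paper's one-line proof glosses over.
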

\begin{proof}
At each time step, Algorithm \ref{daproj} is a type of linear Oseen problem with an additional nudging term.  With the regularity assumptions and with the long time $L^2$ stability of $u^n$ established, analysis from the proof for the nudging term can be combined with standard theory for Oseen equations to achieve well-posedness of each time step and thus also the entire algorithm.
\end{proof}

We now prove that CDA can remove the splitting error of the projection method.  More specifically, with properly chosen parameters, the solution to Algorithm \ref{daproj} is long-time first order accurate in the velocity, and finite time first order accurate in the $L^2(0,T;H^1)$ norm.  We found no improvement in accuracy for the CDA Proj pressure, since the CDA term will change the Hodge decomposition in a way that $p$ will still represent a Lagrange multiplier corresponding to the divergence constraint in the projection step, but can no longer be interpreted as the pressure. Instead, the pressure can be recovered by post-processing, \cite{Shen92, GMS06}.

\begin{theorem}\label{thm1}
Suppose $(w,q)$ is the solution to the NSE with $w\in L^{\infty}(0,\infty;H^3)$, $w_t,w_{tt}\in L^{\infty}(0,\infty;H^1)$, $q\in L^{\infty}(0,\infty;H^1)$, and denote
\[
C_w = \| w_{tt} \|^2_{L^{\infty}(0,\infty;H^{-1})}+ \| w_{t} \|_{L^{\infty}(0,\infty;L^2)}^2  \| w \|_{L^{\infty}(0,\infty;H^3)}^2 +  \| \nabla w \|_{L^{\infty}(0,\infty;L^{\infty})}, \ C_q = \| q\|_{L^{\infty}(0,\infty;H^1)}^2.
\]

Let $\{ u^n, \tilde u^n \}$, $n=1,\ 2,\ 3,...$ denote the solution to Algorithm \ref{daproj}, with $\Delta t \le 1$, $\mu \ge \Delta t^{-2}$, $\mu\ge C_w$, and  
$\mu H^2 \le \frac{\nu}{2C_I^2}$.  Then the following bounds hold for any positive $n$:
\begin{align*}
\| \tilde u^n - w^n \| \le  \| u^n - w^n \| &\le C \dt,
\end{align*}
where $C$ depends on problem data and the NSE solution but is independent of $\dt$ and $\mu$.
\end{theorem}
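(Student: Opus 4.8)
The plan is to derive an error equation for $\enone := \unone - w^{n+1}$ and $\etnone := \utnone - w^{n+1}$ and to exploit the strong nudging to obtain a \emph{contraction} via Lemma \ref{series}, rather than a Gronwall argument. First observe that since $w^{n+1}$ is divergence free, $P_Y w^{n+1} = w^{n+1}$, so $\etnone = P_Y\enone$; because $P_Y$ is an $L^2$-orthogonal projection this immediately gives the left inequality $\|\etnone\| \le \|\enone\|$, and more precisely the Pythagorean identity $\|\enone\|^2 = \|\etnone\|^2 + \dt^2\|\nabla p^{n+1}\|^2$ via the Step-2 relation $\dt\nabla p^{n+1} = \unone - \utnone = (I-P_Y)\enone$. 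It therefore suffices to prove $\|\enone\|\le C\dt$. Next I would evaluate the NSE at $t^{n+1}$, write the backward-Euler time derivative with truncation error $\tau^{n+1} = w_t(t^{n+1}) - (w^{n+1}-w^n)/\dt$ (so that $\|\tau^{n+1}\|_{-1}\le C\dt\,\|w_{tt}\|_{L^\infty(0,\infty;H^{-1})}$), subtract \eqref{pdastep1}, test with $v=\enone\in X$, and multiply by $2\dt$. The discrete derivative $(\unone-\utn)/\dt$ is exactly what produces the \emph{old} error at the projected level, so the time term telescopes as $\|\enone\|^2 - \|\etn\|^2 + \|\enone - \etn\|^2$.

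For the nonlinearity I would use the decomposition $\tilde u^n\cdot\nabla\unone - w^{n+1}\cdot\nabla w^{n+1} = \tilde u^n\cdot\nabla\enone + \etn\cdot\nabla w^{n+1} + (w^n - w^{n+1})\cdot\nabla w^{n+1}$. The first piece vanishes since $\tilde u^n\in Y$ is divergence free with zero normal trace and $\enone\in X$ (the mechanism behind \eqref{tildebilinear}); the second is bounded by $\|\nabla w\|_{L^\infty}\|\etn\|\,\|\enone\|$ and the third by $C\dt\,\|w_t\|\,\|\enone\|$, which is where the $\|\nabla w\|_{L^\infty}$ and $\|w_t\|_{L^\infty(L^2)}\|w\|_{L^\infty(H^3)}$ ingredients of $C_w$ arise. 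The nudging term is treated exactly as in Lemma \ref{lemma:wellposedness}: writing $I_H\enone = \enone + (I_H\enone - \enone)$ and using \eqref{interp1}--\eqref{interp2} together with $\mu H^2\le \nu/(2C_I^2)$, I absorb the interpolation defect into the viscous term while retaining a coercive contribution $\sim\mu\dt\|\enone\|^2$.

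The hard part is the pressure term $-2\dt(\nabla q^{n+1},\enone)$, which does \emph{not} vanish because $\enone$ is not divergence free; this is precisely the projection splitting error. Here I would use that the divergence-free part $\etnone = P_Y\enone$ is $L^2$-orthogonal to gradients, so $(\nabla q^{n+1},\enone) = (\nabla q^{n+1},(I-P_Y)\enone) = \dt(\nabla q^{n+1},\nabla p^{n+1})$, whence $2\dt\,|(\nabla q^{n+1},\enone)| = 2\dt\,\|\nabla q^{n+1}\|\,\|(I-P_Y)\enone\| \le \tfrac{\dt\mu}{4}\|\enone\|^2 + \tfrac{C\dt}{\mu}\|\nabla q^{n+1}\|^2$. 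The crucial point is that the hypothesis $\mu\ge\dt^{-2}$ turns the remainder into $\tfrac{C\dt}{\mu}C_q \le C\,C_q\,\dt^3$: this is the quantitative statement that CDA removes the splitting error. Absorbing every $\|\enone\|^2$ contribution into the nudging coercivity (using $\mu\ge C_w$ for the nonlinear and truncation terms) and every gradient contribution into $\nu\|\nabla\enone\|^2$, and dropping $\|\enone-\etn\|^2\ge 0$, leaves a master inequality of the form
\[
(1 + c\mu\dt)\|\enone\|^2 + \dt\nu\|\nabla\enone\|^2 \le (1 + C\dt)\|\etn\|^2 + C\dt^3 .
\]

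To conclude, for the long-time $L^2$ bound I would drop the viscous term, use $\|\etn\|\le\|\en\|$, apply Poincar\'e, and divide to reach $\alpha\|\enone\|^2 \le \|\en\|^2 + B$ with $\alpha = (1+c\mu\dt)/(1+C\dt) > 1$ and $B = O(\dt^3)$; Lemma \ref{series} with $x_0 = \|e^0\|^2 = 0$ (since $u^0 = w^0$) then yields $\|\en\|\le C\dt$ uniformly in $n$. For the finite-time $L^2(0,T;H^1)$ estimate I would instead keep the viscous term and sum the master inequality over $n$, so that the telescoping leaves $\sum_n \dt\nu\|\nabla\enone\|^2 \le \|e^0\|^2 + NB = O(\dt^2)$ over $N=T/\dt$ steps, i.e. first-order convergence in $L^2(0,T;H^1)$. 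The delicate bookkeeping to get right is tracking the $\dt$-powers so that $B = O(\dt^3)$ — this is exactly where $\mu\ge\dt^{-2}$, the $H^{-1}$ measurement of $\tau^{n+1}$, and $\mu\ge C_w$ are needed — and confirming that the contraction factor $\alpha$ remains bounded below by a constant strictly greater than $1$ uniformly in $\dt$ and $\mu$.
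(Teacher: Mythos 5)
Your proposal is correct and follows essentially the same route as the paper's proof: the same error equation and nonlinear decomposition (your $(w^n-w^{n+1})\cdot\nabla w^{n+1}$ piece is just the paper's Taylor remainder made explicit), the same absorption of the nudging interpolation defect into the viscous term under $\mu H^2 \le \nu/(2C_I^2)$, the same $\mu$-weighted Young bound turning the pressure consistency term into $O(\Delta t^3)$ via $\mu \ge \Delta t^{-2}$, the identification $\tilde e^{n+1}=P_Y e^{n+1}$ to close the recursion, and the conclusion via Lemma \ref{series} with $e^0=0$. The only cosmetic difference is that you first use orthogonality to rewrite $(\nabla q^{n+1},e^{n+1})$ as $(\nabla q^{n+1},(I-P_Y)e^{n+1})$ before bounding by $\|e^{n+1}\|$, which lands in exactly the estimate the paper obtains directly from Cauchy--Schwarz.
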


\begin{remark}
Following usual CDA theory and analysis  (see e.g. \cite{LRZ19}), if the initial condition to Algorithm \ref{daproj} were inaccurate, theorem \ref{thm1} would still hold for $n$ large enough.
\end{remark}

\begin{remark}
	If $I_H$ is the projection method onto $X_H$, or we do nudging as in \cite{RZ21}, the analysis can be improved so that is no upper bound on $\mu$, see \cite{GN20}.
\end{remark}

\begin{proof}
Subtracting the NSE at $t=t^{n+1}$ after testing with $v\in X$ from \eqref{pdastep1} and denoting $e^n = w^n - u^n$ and $\tilde e^n = w^n - \tilde u^n$ yields
\begin{align*}
	\frac{1}{\dt}(\enone-\etn,v)+(\utn\cdot\nabla\unone- w^n \cdot\nabla w^{n+1},v) & -(\nabla q^{n+1},v)+\nu(\nabla \enone,\nabla v)+\mu (I_H(\enone),v) \\ &=\dt (w_{tt}(t^\ast) + \dt w_t(t^{**})\cdot\nabla w^{n+1},v), 
\end{align*}
for some $t^*,t^{**} \in (t^n,t^{n+1})$, thanks to Taylor series approximations in the NSE.  Writing the nonlinear terms as
\[
\utn\cdot\nabla\unone- w^n \cdot\nabla w^{n+1} = \tilde e^n \cdot\nabla w^{n+1} + \tilde u^n \cdot\nabla e^{n+1}
\]
and taking $v=e^{n+1}$ vanishes the second nonlinear term and produces
\begin{multline}
\frac{1}{2\Delta t} \left( \| e^{n+1} \|^2 - \| \tilde e^n \|^2 + \| e^{n+1} - \tilde e^n \|^2 \right) + \nu \| \nabla e^{n+1} \|^2 + \mu \|e^{n+1} \|^2 = \\
-(\nabla q,e^{n+1}) - ( \tilde e^n \cdot\nabla w^{n+1},e^{n+1}) + \dt ( w_{tt}(t^\ast) + w_t(t^{**})\cdot\nabla w^{n+1},e^{n+1}) + \mu (e^{n+1}-I_H(e^{n+1}),e^{n+1}), \label{err1}
\end{multline}
after adding and subtracting $e^{n+1}$ to the first argument in the inner product of the nudging term.  We bound the pressure term using Cauchy-Schwarz and Young's inequalities via
\[
-(\nabla q,e^{n+1}) \le \| \nabla q \| \| e^{n+1} \| \le \frac{\mu}{4} \| e^{n+1} \|^2 + \frac{C_q}{\mu}.
\]
For the right hand side nonlinear term, we use H\"older's inequality and regularity of the NSE solution to get
\[
- ( \tilde e^n \cdot\nabla w^{n+1},e^{n+1}) \le \| \tilde e^n \| \| \nabla w^{n+1} \|_{L^{\infty}} \| e^{n+1} \| \le C_w^{1/2} \| \tilde e^n \| \| e^{n+1} \|.
\]
For the third right hand side term in \eqref{err1}, we again use Cauchy-Schwarz and Young's inequalities as well as regularity of the NSE solution to find that
\[
\dt ( w_{tt}(t^\ast) + w_t(t^{**})\cdot\nabla w^{n+1},e^{n+1}) \le \dt \|  w_{tt}(t^\ast) + w_t(t^{**})\cdot\nabla w^{n+1} \|_{-1} \| \nabla e^{n+1} \| \le 
\frac{\nu}{4} \| \nabla e^{n+1} \|^2 + C_w \nu^{-1} \Delta t^2.
\]
To bound the last term in \eqref{err1}, we first apply the Cauchy-Schwarz inequality, then the interpolation estimate \eqref{interp1}, and finally Young's inequality to obtain
\begin{align*}
\mu (e^{n+1}-I_H(e^{n+1}),e^{n+1}) & \le \mu \| e^{n+1}-I_H(e^{n+1}) \| \| e^{n+1} \| \\
& \le \mu C_I H \| \nabla e^{n+1} \| \| e^{n+1} \| \\
& \le \frac{\mu}{4} \| e^{n+1} \|^2 + \mu C_I^2 H^2 \| \nabla e^{n+1} \|^2.
\end{align*}
Collecting the bounds above together with \eqref{err1} provides the estimate
\begin{multline}
\frac{1}{2\Delta t} \left( \| e^{n+1} \|^2 - \| \tilde e^n \|^2 + \| e^{n+1} - \tilde e^n \|^2 \right) + (\nu - \mu C_I^2 H^2) \| \nabla e^{n+1} \|^2 + \frac{\mu}{2} \|e^{n+1} \|^2  \\
\le  
   \mu^{-1}C_q + C_w^{1/2} \| \tilde e^n \| \| e^{n+1} \| +  C_w \nu^{-1} \Delta t^2.
 \label{err2}
\end{multline}
Using the assumption on the parameter $\nu > 2\mu C_I^2 H^2$ and dropping a positive left hand side term reduces the bound to
\begin{equation}
\frac{1}{2\Delta t} \left( \| e^{n+1} \|^2 - \| \tilde e^n \|^2 \right) + \frac{\nu}{2}  \| \nabla e^{n+1} \|^2 + \frac{\mu}{2} \|e^{n+1} \|^2  
\le  
   \mu^{-1} C_q + C_w^{1/2} \| \tilde e^n \| \| e^{n+1} \| +  C_w \nu^{-1} \Delta t^2. \label{err3}
\end{equation}
Next, we subtract $\frac{1}{\Delta t} (w^{n+1},v)$ from both sides of the projection equation to get for $v\in Y$ that
\[
\frac{1}{\dt}(\etnone,v) - (p^{n+1},\nabla \cdot v) = \frac{1}{\dt}(\enone,v),
\]
which implies $L^2$-projection onto the space $Y$, resulting $\| \tilde e^{n+1} \| \le \| e^{n+1} \|$.

Using this in \eqref{err3} provides the bound
 \begin{align*}
\frac{1}{2\Delta t} \left( \| e^{n+1} \|^2 - \| e^n \|^2 \right) + \frac{\nu}{2}  \| \nabla e^{n+1} \|^2 + \frac{\mu}{2} \|e^{n+1} \|^2  
& \le  
   \mu^{-1} C_q + C_w^{1/2} \|  e^n \| \| e^{n+1} \| +  C_w \nu^{-1} \Delta t^2 \\
   & \le (C_w\nu^{-1} + C_q) \dt^2 + \frac{C_w}{2} \| e^n \|^2 + \frac{C_w}{2} \| e^{n+1} \|^2,
\end{align*}
thanks to the assumption $\mu \ge \Delta t^{-2}$.

 Reducing now gives us
 \begin{equation}
 \frac{ 1 + \nu C_p^{-2} \Delta t + \mu\dt - \frac{C_w\dt}{2}  }{ 1+ \frac{C_w\Delta t}{2}  }  \| e^{n+1} \|^2 \le \| e^n \|^2 + \frac{C_w\nu^{-1} + C_q }{1+ \frac{C_w\Delta t}{2}}  \dt^3,
 \end{equation}
 and hence we obtain the bound
 \begin{equation}
 \alpha \| e^{n+1} \|^2 \le \| e^n \|^2 + \frac{C_w\nu^{-1} + C_q}{1+ \frac{C_w\Delta t}{2}}  \dt^3,
 \end{equation}
 where
 \[
 \alpha := \frac{ 1 + \nu C_q^{-2} \Delta t + \frac{\mu}{2} \dt   }{ 1+ \frac{C_w \Delta t}{2}  }  > 1
 \]
 since $\mu>C_w$. Now applying Lemma \ref{series}, and using $e^0=0$ and that $\Delta t\le 1$, after with some simplification we obtain the bound
 \begin{align*}
 \| e^n \|^2 & \le \frac{1}{\alpha-1} \frac{C_w\nu^{-1} +C_q }{1+ \frac{C_w \dt}{2}}  \dt^3 \\
 &\le  \frac{1+ \frac{C_w \Delta t}{2}  }{ \dt (\nu C_p^{-2}  + \frac{\mu}{2} )  }\frac{C_w\nu^{-1} +C_q }{1+ \frac{C_w \dt }{2}}  \dt^3 \\
 &\le  \frac{C_w\nu^{-1} +C_q } { \nu C_p^{-2}+ \frac{\mu}{2}    } \dt^2 \\
 &\le  \frac{C_w\nu^{-1} +C_q } { \nu C_p^{-2}  } \dt^2 \\
 & \le C\dt^2.
 \end{align*}
Taking square roots finishes the proof for $e^n$.  For $\tilde e^n$, this result together with $\| \tilde e^{n} \| \le \| e^{n} \|$ gives the result.
 
\end{proof}

Now that $L^2$ long time first order accuracy of $u$ and $\tilde u$ from Algorithm \ref{daproj} has been established, we can analyze error in other norms as well as the pressure.  The remaining results are for a finite end time $T$, and due to the complicated expressions of constants, $C$ will represent any constant that is independent of $\dt$ and $\mu$.

\begin{theorem}
Under the assumptions of Theorem \ref{thm1} but with finite end time $T$ and number of time steps $M=\frac{T}{\Delta t}$, we have the error bound
\begin{equation}
\left( \dt \sum_{n=1}^M \| \nabla e^{n+1} \|^2 \right)^{1/2} \le C\dt. \label{l2h1err}
\end{equation}
\end{theorem}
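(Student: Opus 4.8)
The plan is to bypass Lemma \ref{series} and instead sum the per-step energy inequality directly, leveraging the $L^2$ bound already established in Theorem \ref{thm1}. The natural starting point is the inequality obtained in the proof of Theorem \ref{thm1} after invoking the projection estimate $\| \etn \| \le \| \en \|$, namely
\begin{equation*}
\frac{1}{2\dt}\left( \|\enone\|^2 - \|\en\|^2 \right) + \frac{\nu}{2}\|\nabla\enone\|^2 + \frac{\mu}{2}\|\enone\|^2 \le \mu^{-1}C_q + C_w^{1/2}\|\en\|\,\|\enone\| + C_w\nu^{-1}\dt^2 .
\end{equation*}
This already isolates on the left the viscous $H^1$ term that appears in \eqref{l2h1err}, so no new energy estimate is required.

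First I would multiply through by $2\dt$ and sum over $n=0,1,\dots,M-1$. The leading difference telescopes to $\|e^M\|^2 - \|e^0\|^2 = \|e^M\|^2 \ge 0$ since $e^0=0$, and I would discard this term together with the nonnegative contribution $\mu\dt\sum\|\enone\|^2$ on the left, keeping only $\nu\dt\sum_n\|\nabla\enone\|^2$ (which coincides, up to index labeling, with the sum in \eqref{l2h1err}). This reduces the task to showing that the accumulated right-hand side is $O(\dt^2)$.

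The crucial step is bounding the summed right-hand side, and here the hypotheses and Theorem \ref{thm1} combine. The sum has $M = T/\dt$ contributions. Using $\|\en\|,\|\enone\| \le C\dt$ from Theorem \ref{thm1}, the nonlinear term and the truncation term each contribute $\sum_n 2\dt\cdot O(\dt^2) = 2M\dt\cdot O(\dt^2) = 2T\cdot O(\dt^2)$. The pressure contribution is $\sum_n 2\dt\,\mu^{-1}C_q = 2T\mu^{-1}C_q$, which is $O(\dt^2)$ precisely because the hypothesis $\mu \ge \dt^{-2}$ forces $\mu^{-1}\le\dt^2$. Hence the entire right-hand side is bounded by $C\dt^2$, giving $\nu\dt\sum_n\|\nabla\enone\|^2 \le C\dt^2$; dividing by $\nu$ and taking square roots yields \eqref{l2h1err}, with $C$ independent of $\dt$ and $\mu$.

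The main obstacle, and really the only subtlety, is the accumulation over $M=O(\dt^{-1})$ time steps: a careless bound of each summand by $O(\dt^2)$ would sum to $O(\dt)$ and destroy the first-order rate. The resolution is that, after multiplying by $2\dt$, each summand is genuinely $O(\dt^3)$, so the $O(\dt^{-1})$ terms accumulate only to $O(\dt^2)$; the single term that is a priori merely $O(\dt)$, the pressure contribution $2\dt\,\mu^{-1}C_q$, is rescued by the strong nudging assumption $\mu \ge \dt^{-2}$. Because the argument uses only telescoping and no Gronwall inequality, the constant carries no exponential dependence on $T$, consistent with the long-time framework of the preceding results.
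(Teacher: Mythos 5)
Your proof is correct, and at the decisive step it diverges from the paper's argument in a way that matters. Both start from the same inequality (the paper's \eqref{err3} combined with the projection estimate $\| \tilde e^n \| \le \| e^n \|$), multiply by $2\dt$, and sum over the $M = T/\dt$ steps. The paper, however, first discards the telescoping structure: it moves $\frac{1}{2\dt}\| \tilde e^n \|^2 \le C\dt$ to the right-hand side to obtain the pointwise-in-time bound $\frac{\nu}{2}\| \nabla e^{n+1} \|^2 \le C\dt$, and only then multiplies by $2\dt$ and sums. As literally written, that summation yields $\nu\dt\sum_n \| \nabla e^{n+1} \|^2 \le C M \dt^2 = CT\dt$, i.e.\ only $O(\dt^{1/2})$ in the $L^2(0,T;H^1)$ norm rather than the $C\dt^2$ asserted in \eqref{err20}. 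Your version keeps the difference $\| e^{n+1} \|^2 - \| e^n \|^2$ intact so that it telescopes to $\| e^M \|^2 - \| e^0 \|^2 \ge 0$ and contributes nothing after summation, while each remaining right-hand side summand is genuinely $O(\dt^3)$ after multiplication by $2\dt$ (the pressure term via $\mu^{-1} \le \dt^2$, the nonlinear term via the $L^2$ bound of Theorem \ref{thm1}, and the consistency term directly), so the $M$ of them accumulate only to $O(\dt^2)$. In short, your telescoping is precisely the repair the paper's final step needs: you correctly identified the accumulation over $O(\dt^{-1})$ steps as the only subtlety, and your handling of it is the rigorous one. The only cosmetic discrepancy is the index range ($n=0,\dots,M-1$ versus the paper's $n=1,\dots,M$ inside the sum), which you flag and which is immaterial.
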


\begin{proof}
We begin this proof from \eqref{err2} in the previous theorem's proof:
\begin{multline}
\frac{1}{2\Delta t} \left( \| e^{n+1} \|^2 - \| \tilde e^n \|^2 + \| e^{n+1} - \tilde e^n \|^2 \right) + (\nu - \mu C_I^2 H^2) \| \nabla e^{n+1} \|^2 + \frac{\mu}{2} \|e^{n+1} \|^2  \\
\le  
   \mu^{-1}C_q + C_w^{1/2} \| \tilde e^n \| \| e^{n+1} \| +  C_w \nu^{-1} \Delta t^2.
 \label{err12}
\end{multline}
Using long time first order accuracy of $\tilde u^n$ and $u^n$ along with $\nu>2\mu C_I^2 H^2$ and $\mu\ge \Delta t^{-2}$, we obtain 
\[
\frac{1}{2\Delta t} \left( \| e^{n+1} \|^2 - \| \tilde e^n \|^2 \right) + \frac{\nu}{2}  \| \nabla e^{n+1} \|^2 + \frac{\mu}{2} \|e^{n+1} \|^2  
\le  
   \dt^2 C_q + C_w^{1/2}\dt^2  +  C_w \nu^{-1} \Delta t^2.
\]
Dropping positive left hand side terms and noting $\dt^{-1} \| \tilde e^n \|^2 \le C\dt$ we get the bound
\begin{equation}
 \frac{\nu}{2}  \| \nabla e^{n+1} \|^2 \le C\dt + C\dt^2 \le C\dt.
 \end{equation}
Finally, multiplying both sides by $2\dt$ and summing over time steps produces
\begin{equation}
\nu\dt \sum_{n=1}^M \| \nabla e^{n+1} \|^2 \le C\dt^2, \label{err20}
\end{equation}
which finishes the proof.
\end{proof}

{\color{black}
\section{CDA-Penalty method error analysis}\label{penaltyDAerrorsection}
In this section, we show the long-time accuracy of the CDA-Penalty method. As we do in the previous section, we consider the semi-discrete CDA-Penalty algorithm.

\begin{algorithm}\label{penaltyCDAalg}
	Let $w$ be the solution of \eqref{nse1}-\eqref{nse2} and forcing $f\in L^{\infty}(0,\infty;H^{-1}(\Omega))$.  Then for $u^0=w^0$ and given nudging parameter $\mu \ge 0$, find $ u^{n}$ for n=1,2,3,... via the time stepping algorithm:
\begin{multline}\label{penaltyCDAalgeqn}
\frac{1}{\Delta t}\left(u^{n+1} - u^n,v\right) 
+\tilde b(u^{n},u^{n+1},v)
+ \nu\left(\nabla u^{n+1},\nabla v\right) \\ +\eps^{-1}\left(\nabla\cdot u^{n+1},\nabla\cdot v\right)+\mu\left( I_H(u^{n+1}-w^{n+1}),v\right) = \left(f^{n+1},v\right),\ \forall v\in X.
\end{multline}
\end{algorithm}

\begin{theorem}
Suppose $(w,q)$ is the solution to the NSE with $w\in L^{\infty}(0,\infty;H^3)$, $w_t,w_{tt}\in L^{\infty}(0,\infty;H^1)$, $q\in L^{\infty}(0,\infty;H^1)$, and denote
\[
C_w = \| w_{tt} \|^2_{L^{\infty}(0,\infty;H^{-1})} + \| w_{t} \|_{L^{\infty}(0,\infty;L^2)}^2  \| w \|_{L^{\infty}(0,\infty;H^3)}^2 +  \| \nabla w \|_{L^{\infty}(0,\infty;L^{\infty})}, \ C_q = \| q\|_{L^{\infty}(0,\infty;H^1)}^2.
\]

Let $\{ u^n, \tilde u^n \}$, $n=1,\ 2,\ 3,...$ denote the solution to Algorithm \ref{daproj}, with $\Delta t \le 1$, $\mu \ge \Delta t^{-2}$,
% $\mu\ge C_w$, and  
{\color{black} $ \mu > C \nu^{-1} C_w^2 - C_p^{-2}\frac{\nu}{2}$} and
$\mu H^2 \le \frac{\nu}{2C_I^2}$.  Then the following bounds hold for any positive $n$:
\begin{align*}
 \| u^n - w^n \| &\le C \dt,
\end{align*}
where $C$ depends on problem data and the NSE solution but is independent of $\dt$ and $\mu$.
\end{theorem}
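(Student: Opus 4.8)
The plan is to follow the proof of Theorem \ref{thm1} as closely as possible, the two genuinely new features being that the true pressure now survives in the error equation (there is no projection step to remove it) and that the convective term is in skew-symmetric form. Writing $\en = w^n - u^n$, I would first form the error equation by subtracting the weak form of the NSE at $t^{n+1}$ from the CDA-Penalty scheme \eqref{penaltyCDAalgeqn} tested against $v \in X$. In doing so I expand the time derivative as $w_t^{n+1} = \frac{w^{n+1}-w^n}{\dt} + \frac{\dt}{2} w_{tt}(t^\ast)$ and linearize the convective term through $\tilde b(w^n,w^{n+1},v) - \tilde b(w^{n+1},w^{n+1},v) = -\dt\,\tilde b(w_t(t^{\ast\ast}),w^{n+1},v)$, both valid for some $t^\ast,t^{\ast\ast}\in(t^n,t^{n+1})$ by Taylor's theorem. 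Since $w$ is divergence free we have $\nabla\cdot\enone = -\nabla\cdot u^{n+1}$, so testing the grad-div term with $\enone$ produces $\eps^{-1}\|\nabla\cdot\enone\|^2 \ge 0$; this is a nonnegative quantity that I simply discard from the left-hand side, which is precisely why no $O(\eps)$ consistency error survives.

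Testing the error equation with $v=\enone$, I would decompose the convective difference as $\tilde b(\en,w^{n+1},\enone) + \tilde b(u^n,\enone,\enone)$ and use the skew symmetry \eqref{tildebilinear} to drop the second piece, leaving $\tilde b(\en,w^{n+1},\enone)$ together with the $O(\dt)$ linearization remainder. The true pressure contributes a term $(q^{n+1},\nabla\cdot\enone)$; since $\enone\in X=H^1_0(\Omega)^d$, I would integrate by parts to rewrite it as $-(\nabla q^{n+1},\enone)$ and absorb it against the nudging term via $|(\nabla q^{n+1},\enone)|\le \frac{\mu}{4}\|\enone\|^2 + \frac{C_q}{\mu}$. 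This is the step that plays the role the projection played in Theorem \ref{thm1}: the pressure is controlled by $\mu$ rather than by the grad-div penalty, so the resulting bound is independent of $\eps$.

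For the remaining terms I would proceed exactly as in Theorem \ref{thm1}: the Taylor remainders $\frac{\dt}{2}(w_{tt}(t^\ast),\enone)$ and $\dt\,\tilde b(w_t(t^{\ast\ast}),w^{n+1},\enone)$ are estimated in the $H^{-1}$--$H^1$ duality and absorbed into $\frac{\nu}{4}\|\nabla\enone\|^2$, leaving an $O(\dt^2)$ contribution proportional to $C_w\nu^{-1}$, while the interpolation defect $\mu(\enone-I_H(\enone),\enone)$ is handled with \eqref{interp1} and absorbed using $\mu C_I^2 H^2 \le \frac{\nu}{2}$. The genuinely new and most delicate estimate is $\tilde b(\en,w^{n+1},\enone)$. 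Expanding it in skew-symmetric form gives $\frac12((\en\cdot\nabla)w^{n+1},\enone) - \frac12((\en\cdot\nabla)\enone,w^{n+1})$; the first piece is controlled by $\|\nabla w\|_{L^\infty}$ and the second by $\|w\|_{L^\infty}\lesssim\|w\|_{H^3}$. In each case I would apply Poincar\'e to $\enone$ and then Young's inequality, moving the entire $\enone$-dependence into $\frac{\nu}{4}\|\nabla\enone\|^2$ at the cost of a term of the form $C\nu^{-1}C_w^2\|\en\|^2$. This is exactly the coefficient that forces the stated lower bound on $\mu$.

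Collecting everything, discarding the nonnegative grad-div and $\|\enone-\en\|^2$ terms, applying Poincar\'e once more to the leftover viscous term, multiplying by $2\dt$, and using $\mu\ge\dt^{-2}$ to convert $C_q/\mu$ into an $O(\dt^2)$ contribution, I expect a recursion of the shape $\alpha\|\enone\|^2 \le \|\en\|^2 + C\dt^3$ with $\alpha = \frac{1+\dt(\nu C_p^{-2}+\mu)}{1+C\nu^{-1}C_w^2\dt}$. The hypothesis $\mu > C\nu^{-1}C_w^2 - \frac{\nu}{2}C_p^{-2}$ is precisely what guarantees $\alpha>1$, so Lemma \ref{series} with $e^0=0$ and $\dt\le 1$ yields $\|\en\|^2 \le \frac{C\dt^3}{\alpha-1} \le C\dt^2$, and taking square roots gives the claim. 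I expect the main obstacle to be the nonlinear estimate of $\tilde b(\en,w^{n+1},\enone)$: it must be arranged so that no uncontrolled $\|\enone\|^2$ survives (hence the route of absorbing everything into the viscous term via Poincar\'e) and so that the resulting $\|\en\|^2$ coefficient matches the $\mu$ threshold appearing in the hypotheses.
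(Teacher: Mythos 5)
Your proposal follows essentially the same route as the paper's proof: the same error equation, the same skew-symmetric splitting of the convective term, the same absorption of the pressure into the nudging term via $\frac{\mu}{4}\|\enone\|^2 + C_q/\mu$, the same discarding of the nonnegative grad-div term (which is why the bound is $\eps$-independent), and the same recursion $\alpha\|\enone\|^2 \le \|\en\|^2 + C\dt^3$ closed by Lemma \ref{series}. The only cosmetic difference is in the H\"older exponents used on $\tilde b(\en,w^{n+1},\enone)$ (the paper uses $\|\nabla w\|_{L^3}\|\enone\|_{L^6}$ where you use $\|\nabla w\|_{L^\infty}$), which leads to the same $C\nu^{-1}C_w^2\|\en\|^2$ term and the same threshold on $\mu$.
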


\begin{remark}
From the theorem above, we observe that long time $L^2$ accuracy holds for Algorithm \ref{penaltyCDAalg} and thus so does long time $L^2$ stability.  Thus, under the assumptions of the
theorem, these results immediately can be combined with the Lax-Milgram theorem to establish well-posedness of Algorithm \ref{penaltyCDAalg}.
\end{remark}

\begin{proof}
Subtracting the NSE at $t=t^{n+1}$ from \eqref{penaltyCDAalgeqn}, testing  with $v\in X$ and letting $e^n=w^n-u^n$  provides
\begin{multline*}
\frac{1}{\Delta t}\left(e^{n+1} - e^n,v\right) 
+\tilde b(u^{n},e^{n+1},v)
+\tilde b(e^{n},w^{n+1},v)
+ \nu\left(\nabla e^{n+1},\nabla v\right)
 +\eps^{-1}\left(\nabla\cdot e^{n+1},\nabla\cdot v\right)
+\mu\left( I_H(e^{n+1}),v\right) 
\\
= -(\nabla q^{n+1}, v) + \dt (w_{tt}(t^\ast) + \dt w_t(t^{**})\cdot\nabla w^{n+1},v),
\end{multline*}
for some $t^*,t^{**} \in (t^n,t^{n+1})$, thanks to Taylor series approximations in the NSE.

Setting $v=e^{n+1}$, we derive
\begin{multline*}
\frac{1}{2\Delta t}\left(\|e^{n+1}\|^2 - \|e^n\|^2 +\|e^{n+1} - e^n\|^2 \right) 
+ \tilde b(e^n, w^{n+1},e^{n+1})
+\nu \|\nabla e^{n+1}\|^2 
+\eps^{-1}\|\nabla\cdot e^{n+1}\|^2
 	+\mu\left( I_H(e^{n+1}),v\right) 
 	\\
 	= -(\nabla q^{n+1}, e^{n+1})+\dt (w_{tt}(t^\ast) + \dt w_t(t^{**})\cdot\nabla w^{n+1},v),
\end{multline*}
thanks to the polarization identity. Then, by adding and subtracting $e^{n+1}$ in the first component of the nudging term and dropping positive terms $\|e^{n+1} - e^n\|^2$ and $\eps^{-1}\|\nabla\cdot e^{n+1}\|^2$ , we get that
\begin{multline*}
	\frac{1}{2\Delta t} \|e^{n+1}\|^2 
	+\nu \|\nabla e^{n+1}\|^2 
	+\mu\|e^{n+1}\|^2
	= 
	\frac{1}{2\dt} \|e^n\|^2 \\ 
	- \tilde b(e^n, w^{n+1},e^{n+1})
	-(\nabla q^{n+1}, e^{n+1})
	+\mu (I_H(e^{n+1})-e^{n+1},e^{n+1})
	+\dt (w_{tt}(t^\ast) + \dt w_t(t^{**})\cdot\nabla w^{n+1},v),
\end{multline*}
Using H\"older's and Young's inequality and regularity of the NSE solution, we obtain
\begin{align*}
|-\tilde b(e^n, w^{n+1},e^{n+1})|
&=
\frac{1}{2}
\left(
((e^n\cdot\nabla) w^{n+1},e^{n+1})
-
((e^n\cdot\nabla) e^{n+1},w^{n+1})
\right)
\\
&\leq
\|e^n\| \|\nabla w^{n+1}\|_{L^{3}} \|e^{n+1}\|_{L^6}
+
\|e^n\| \|\nabla e^{n+1}\| \|w^{n+1}\|_{L^{\infty}}
\\
&\leq
\|e^n\| \| w^{n+1}\|_{H^3} \|\nabla e^{n+1}\|
+
\|e^n\| \|\nabla e^{n+1}\| \|w^{n+1}\|_{L^{\infty}}
\\
&\leq
\frac{\nu}{2} \|e^{n+1}\|^2
+
C \nu^{-1 }C_w^2 \|e^n\|^2.
\end{align*}
The rest of the left hand side terms are bounded by following the same analysis as in the proof of Theorem \ref{thm1}.
%Last three terms on the right hand side can be bounded in the same way as in the proof of Theorem \ref{thm1}.
%\[
%-(\nabla q,e^{n+1}) \le \| \nabla q \| \| e^{n+1} \| \le \frac{\mu}{4} \| e^{n+1} \|^2 + \frac{C_q}{\mu}.
%\]
%
%\[
%\dt ( w_{tt}(t^\ast) + w_t(t^{**})\cdot\nabla w^{n+1},e^{n+1})\| \le 
%\frac{\nu}{4} \| \nabla e^{n+1} \|^2 + C_w \nu^{-1} \Delta t^2.
%\]
%
%\begin{align*}
%	\mu (e^{n+1}-I_H(e^{n+1}),e^{n+1}) \le \frac{\mu}{4} \| e^{n+1} \|^2 + \mu C_I^2 H^2 \| \nabla e^{n+1} \|^2.
%\end{align*}
Combining all those bound and  and multiply both sides by $2\dt$ provides

\begin{multline*}
	 \|e^{n+1}\|^2 
	 +\dt\frac{\nu}{2}\|\nabla e^{n+1}\|^2
	+\dt\left(\nu-2\mu C_I^2 H^2 \right)\|\nabla e^{n+1}\|^2 
	+\dt\mu  \|e^{n+1}\|^2
	\\ \leq
	\left(1+ C\dt \nu^{-1} C_w^2\right) \|e^n\|^2 
	+2\dt\mu^{-1} C_q
 +2 C_w \nu^{-1} \Delta t^3.
\end{multline*}
Since $\nu>2\mu C_I^2 H^2$, we drop the positive term $\dt\left(\nu-2\mu C_I^2 H^2 \right)\|\nabla e^{n+1}\|^2 $. Then, by using the Poincar\'e inequality on the left hand side and assuming $\mu\geq\dt^{-2}$, we obtain
\begin{align*}
	\left(1+\dt C_p^{-2}\frac{\nu}{2} +\dt\mu\right)\|e^{n+1}\|^2
	\leq
	\left(1+ C\dt \nu^{-1} C_w^2\right) \|e^n\|^2 
	+\left( 2\mu^{-1} C_q
	+2 C_w \nu^{-1}\right) \Delta t^3.
\end{align*}
Dividing both side by $\left(1+ 2\Delta t C_w\right)$ provides
\begin{align*}
	\alpha\|e^{n+1}\|^2
	\leq
	\|e^n\|^2 
	+\frac{ 2\mu^{-1} C_q
	+2 C_w \nu^{-1}}{1+ C\dt \nu^{-1} C_w^2}\Delta t^3,
\end{align*}
where 
\begin{align*}
\alpha\coloneqq \frac{1+\dt C_p^{-2}\frac{\nu}{2} +\dt\mu}{1+ C\dt \nu^{-1} C_w^2 }>1,
\end{align*}
since $ \mu > C \nu^{-1} C_w^2 - C_p^{-2}\frac{\nu}{2}$.

Finally, by Lemma \ref{series}, we obtain
\begin{align*}
	\|e^{n+1}\|^2
	&\leq
\frac{1}{\alpha-1}
\frac{ 2\mu^{-1} C_q
		+2 C_w \nu^{-1}}{1+ C\dt \nu^{-1} C_w^2 }\Delta t^3
	\\
%		&\leq
%	\frac{1+ C\dt \nu^{-1} C_w^2 }{\dt\left( C_p^{-2}\frac{\nu}{2} +\mu-C \nu^{-1} C_w^2 \right)}
%	\frac{ 2\mu^{-1} C_q
%		+2 C_w \nu^{-1}}{1+ C\dt \nu^{-1} C_w^2 }\Delta t^3
		&\leq
\frac{ 2\mu^{-1} C_q
	+2 C_w \nu^{-1}}{\dt\left( C_p^{-2}\frac{\nu}{2} +\mu-C \nu^{-1} C_w^2 \right)}\Delta t^3
	\\&\leq C\dt^2
	,
\end{align*}
since $\dt<1$.
Taking the square of both sides finishes the proof.

\end{proof}

\section{Numerical Results}\label{experimentsection}
In this section, we illustrate the above theory with two numerical tests, an analytical test with known true solution and channel flow past a block (a.k.a. square cylinder \cite{ST96}).  For these tests we compute with both projection and penalty methods.  In all results, we observe that CDA improves solution accuracy, and with enough measurement data it effectively removes the splitting error of the projection method and the consistency error of the penalty method.  

For the spatial discretization, we use a regular, conforming triangulation of the domain $\Omega$ which is denoted by $\tau_h$. Let $X_h \subset X$ and $Q_h\subset Q$ be an inf-sup stable pair of discrete velocity-pressure spaces. We take $X_h = X\cap P_2(\tau_h)$ and $Q_h = Q\cap P_{1}(\tau_h)$ Taylor-Hood or Scott-Vogelius elements in our tests, however our results in the previous sections are extendable to most other inf-sup stable element choices.

For all our penalty method tests, we use $\eps=1$.  While this is a very large penalty and smaller penalty values would lead to less consistency error and thus better accuracy, smaller penalty values also lead to linear systems that are very difficult to solve for large scale problems since the matrix arising from the grad-div term is singular.  With $\eps=1$, however, solving these systems can still be reasonably efficient \cite{HR13,benzi,OT14}.  

\subsection{Known analytical solution}

For our first experiment, we illustrate the accuracy theory above for Algorithms \ref{daproj} and \ref{penaltyCDAalg} to a chosen analytical solution
\begin{align*}
	u(x,y,t)&= \left( e^t \cos(y),e^t \sin(x) \right) \\
	p(x,y,t)&=(x-y)(1+t),
\end{align*}
on the unit square domain. We consider $(P_2, P_1)$ Taylor-Hood elements for velocity and pressure.  The initial velocity is taken as $u(0)=0$ in the CDA tests. The source term $f$ is calculated from the chosen solution and the NSE.
\begin{figure}[H]
	\centering
	\includegraphics[width = .48\textwidth, height=.35\textwidth]{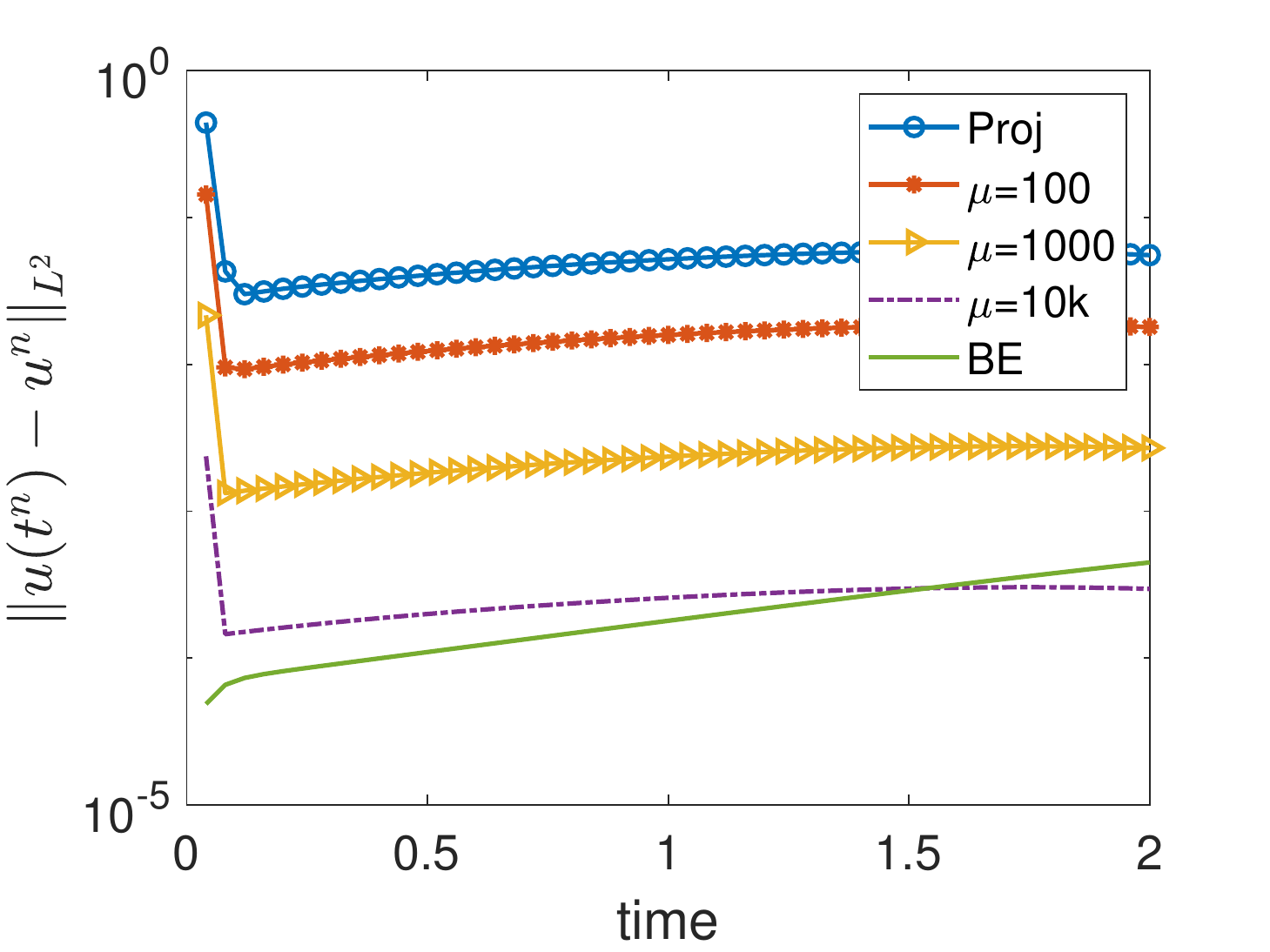}
	\caption{Shown above is the $L^2$ difference to the true solution versus time, for finite element solution of NSE with backward Euler time discretization and Algorithm \ref{daproj} with varying $\mu$ for zero initial velocities.}\label{L2BDerror}
\end{figure}

We first test Algorithm \ref{daproj} (CDA Projection method) on [0,2] with $\Delta t=0.05$ on an $h=1/128$ uniform triangular mesh, with $H=1/32$ grid for the measurement data. Figure \ref{L2BDerror} shows $L^2(\Omega)$ error versus time for varying $\mu$, and for comparison also with the usual backward Euler (BE) FEM using the nodal interpolant of $u(0)$ as the initial condition.  We observe that as $\mu$ increases, the error approaches that of BE (which is known to be first order in $\Delta t$), with it reaching the same level of accuracy when $\mu=10^5$ despite having a very inaccurate initial condition.

We repeat this test for Algorithm \ref{penaltyCDAalg} (CDA Penalty method), and results are shown in figure \ref{penaltyCDAalg} as $L^2$ error versus time.  Results are similar to that of CDA Projection, with improvement in accuracy as $\mu$ increases and finally achieving the same accuracy as BE once $\mu=10^5$.

\begin{figure}[H]
	\centering
		\includegraphics[width = .48\textwidth, height=.35\textwidth]{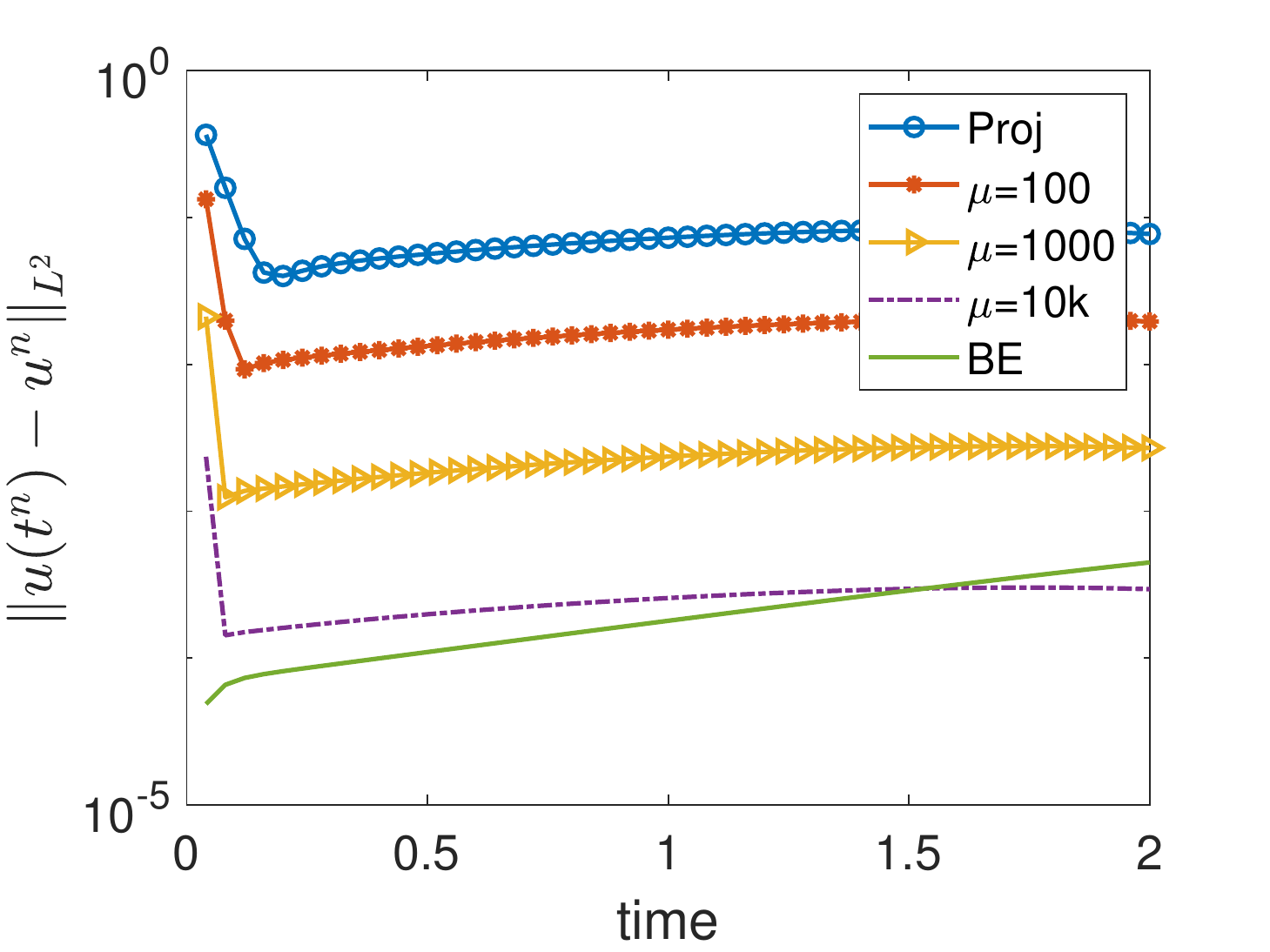}
	\caption{Shown above is the $L^2$ difference to the true solution versus time, for finite element solution of NSE with backward Euler time discretization and Algorithm  \ref{penaltyCDAalg}  with $\eps=1$ and varying $\mu$ for zero initial velocities.}\label{L2errorBEPenalty}
\end{figure}

\subsection{Channel flow past a block}

The second experiment tests the proposed data assimilation methods on the problem of channel flow past a block. Many experimental and numerical studies can be found in the literature \cite{TGO15, R97, SDN99}. The domain of the problem consists of a $2.2\times0.41$ rectangular channel, and a block having a side length of $0.1$ centered at $(0.2, 0.2)$ from the bottom left corner of the rectangle. See  Figure \ref{fig:blockdomain} for a diagram of the domain.

No-slip velocity boundary and homogeneous normal boundary conditions are enforced on the block and walls for step 1  and step 2 of the projection scheme with CDA respectively. The inflow and outflow flow profiles are given by
\begin{align}
	u_1(0,y,t)=&u_1(2.2,y,t)=\frac{6}{0.41^2}y(0.41-y),\nonumber\\
	u_2(0,y,t)=&u_2(2.2,y,t)=0.
\end{align}
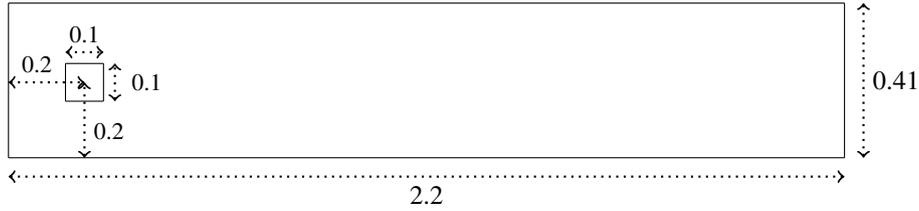
\begin{figure}[H]
	\centering
	\begin{tikzpicture}[scale=5]
		\draw (0,0) -- (2.2,0);
		\draw [dotted, <->, thick] (0,-0.05) -- (2.2,-0.05);
		\draw (1.1,-0.05) node[below]{$2.2$};
		\draw (2.2,0) -- (2.2,0.41);
		\draw [dotted, <->, thick] (2.25,0) -- (2.25,0.41);
		\draw (2.25,0.205) node[right]{$0.41$};
		\draw (0,0.41) -- (2.2,0.41);
	%	\draw (1.9,0.30) node{outlet};
		\draw (0,0) -- (0,0.41);
		\draw[dotted, <->, thick] (0.2,0)--(0.2,0.2);
		\draw (0.15,0.15) -- (0.25,0.15);
		\draw (0.25,0.15) -- (0.25,0.25);
		\draw (0.15,0.25) -- (0.25,0.25);
		\draw (0.15,0.15) -- (0.15,0.25);
		\draw [dotted, <->, thick] (0,0.2)--(0.2,0.2);
	%	\draw (-0.2,0.3)node{inlet};
		\draw [dotted, <->, thick]  (0.28,0.15)--(0.28,0.25);
		\draw [dotted, <->, thick]  (0.15,0.28)--(0.25,0.28);
		\draw (0.3,0.2)node[right]{\small$0.1$};
		\draw (0.2,0.28)node[above]{\small$0.1$};
		\draw (0.075,0.20)node[above]{\small$0.2$};
		\draw (0.2,0.07)node[right]{\small$0.2$};
	\end{tikzpicture}
	\caption{The domain for the channel flow past a cylinder numerical experiment}\label{fig:blockdomain}
\end{figure}
The kinematic viscosity is taken to be $\nu=10^{-3}$ and external force $f=0$. Quantities of interest for this flow are lift and drag coefficients. We use $(P_2, P_1^{disc})$ Scott-Vogelius elements on a barycenter refined Delaunay mesh that provides $19.4k$ velocity and $14.3k$ pressure degrees of freedom.  We take $T=10$ as the end time in our computations.  The BDF2-FEM scheme with $\Delta t=0.002$ sufficiently resolves the solution on this mesh, and we use this as the resolved solution from which to draw measurements from and make comparisons to.  Below, we give lift $c_l(t)$ and drag $c_d(t)$ calculations from tests with and without CDA:

\begin{align*}
c_d(t)&=\frac{2}{\rho L U^2_{max}} \int_{S} \left(\rho \nu \frac{\partial u_{ts}}{\partial n} n_y - p(t) n_x\right)\ dS,\\
c_l(t)&=-\frac{2}{\rho L U^2_{max}} \int_{S} \left(\rho \nu \frac{\partial u_{ts}}{\partial n} n_x - p(t) n_y\right)\ dS,\\
\end{align*}
where $U_{max}$ is the maximum mean flow, $L$ is the diameter of the cylinder, $n =
(nx; ny)^T$ is the normal vector on surface $S$ and $u_{ts}$ is the tangential velocity, \cite{ST96}.

In addition to testing the first order methods in Algorithms  \ref{daproj} and \ref{penaltyCDAalg}, we also test their BDF2 analogues, which are given in PDE form below. \\

CDA Proj Step 1 with BDF2: Find $u^{n+1}$:
\begin{align*}
	\frac{1}{2\Delta t}\left( 3 u^{n+1} -4\tilde u^{n} +\tilde u^{kn-1} \right) + \tilde u^{n}\cdot\nabla u^{n+1} - \nu\Delta u^{n+1}  + -\nabla p^{n} + \mu  I_H(u^{n+1} - w^{n+1}) & = f^{n+1},\\
	u^{n+1}|_{\partial\Omega}&=0.
\end{align*}

CDA Proj Step 2 with BDF2: Project $u^{n+1}$ into the divergence-free space
\begin{align*}
	\frac{1}{2\Delta t} \left(3\tilde u^{n+1} - 3u^{n+1}\right)+ \nabla\left( p^{n+1}-p^n\right)  &=0, \\
	\nabla \cdot \tilde u^{n+1} &=0,\\
	\tilde u^{n+1} \cdot n|_{\partial\Omega}&=0.
\end{align*}

CDA-Penalty with BDF2: Find $u^{n+1}$:
\begin{equation}
	\begin{aligned}
		\frac{1}{2\Delta t}\left( 3 u^{n+1} -4 u^{n} + u^{k-1} \right) + \tilde B(u^n, u^{n+1})
		- \nu\Delta u^{n+1} -\eps^{-1}\nabla\div u^{n+1}  
		+\mu I_H(u^{n+1}-w^{n+1}) & = f^{n+1},\nonumber\\
		u^{n+1}|_{\partial\Omega}&=0.\nonumber
	\end{aligned}
\end{equation}

\subsubsection{Projection method results}

In this subsection, we test both BE and BDF2 projection methods.  First, we test with no CDA and varying time step sizes.  Results are shown in figure \ref{endraglift_BE_proj}, as drag and lift coefficients versus time.  For BE projection, results are bad for each choice of $\Delta t$: although there is improvement as $\Delta t$ decreases, even with $\Delta t=10^{-4}$, results are quite inaccurate.  With BDF2 projection, results are significantly better, and with $\Delta t=10^{-4}$ the results match that of the resolved solution.
\begin{figure}[H]
	\centering
BE Projection (no CDA)\\
	\includegraphics[width = .48\textwidth, height=.35\textwidth,viewport=0 0 500 430, clip]{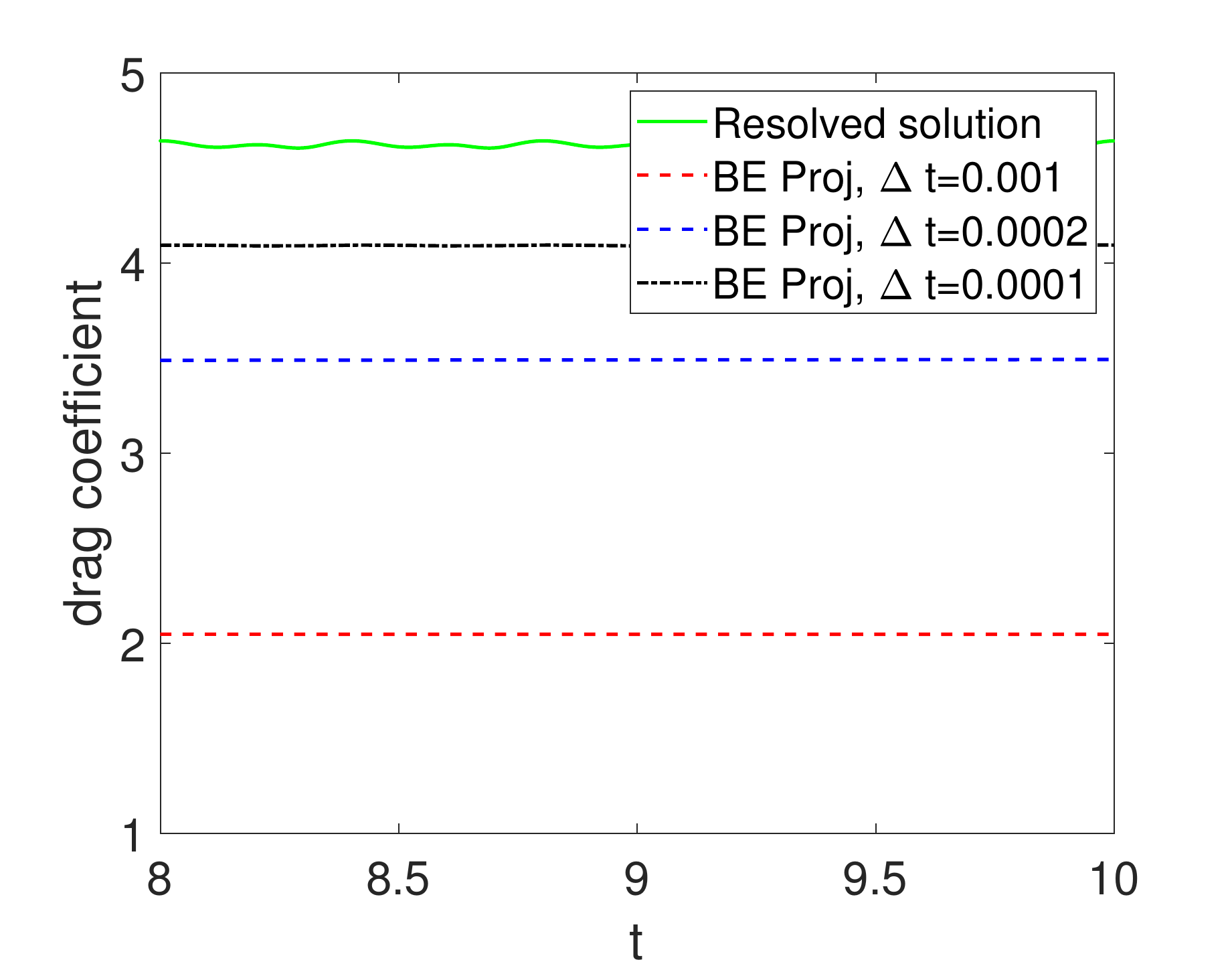}
	\includegraphics[width = .48\textwidth, height=.35\textwidth,viewport=0 0 500 430, clip]{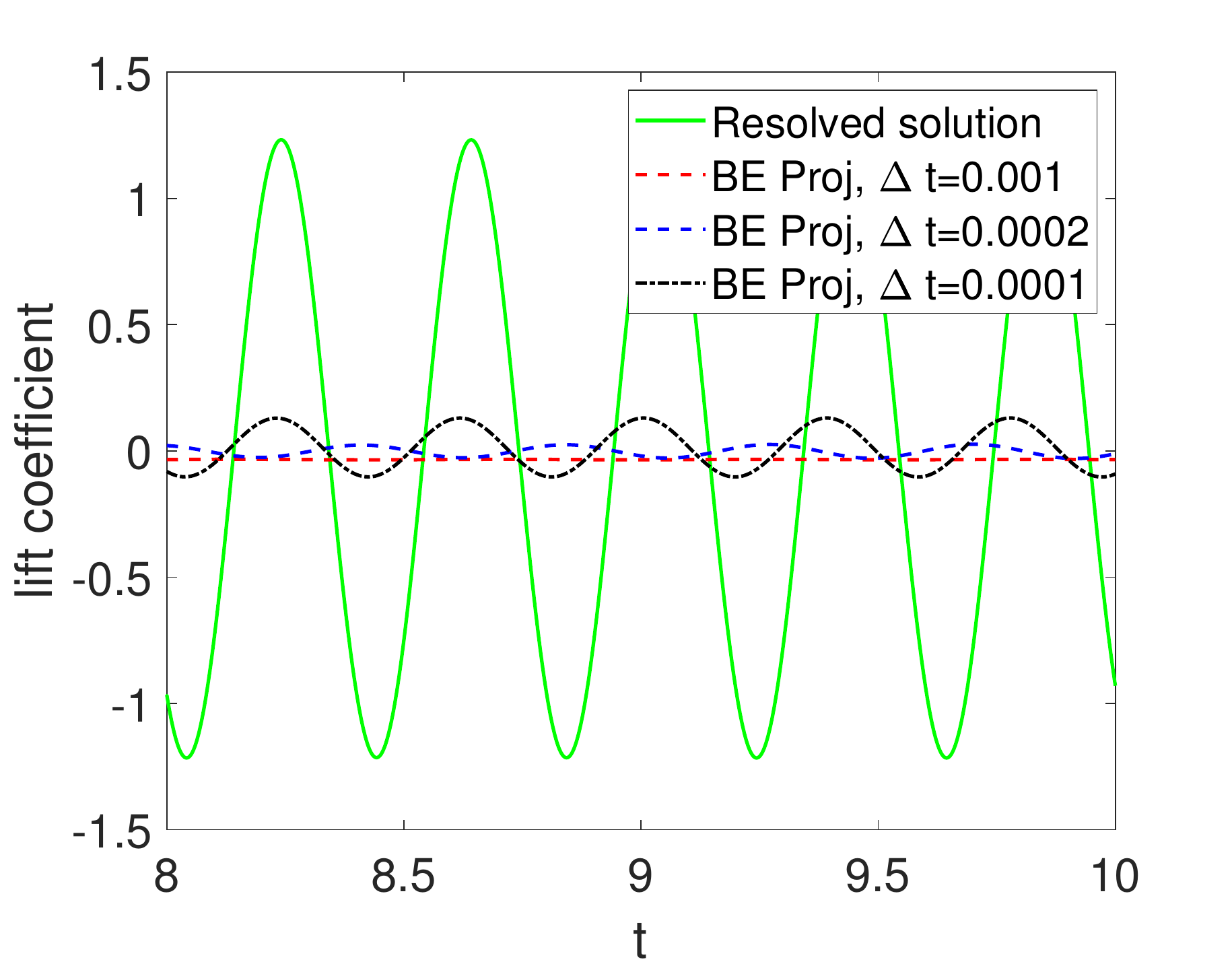}\\
BDF2 Projection (no CDA)\\
	\includegraphics[width = .48\textwidth, height=.35\textwidth,viewport=0 0 500 430, clip]{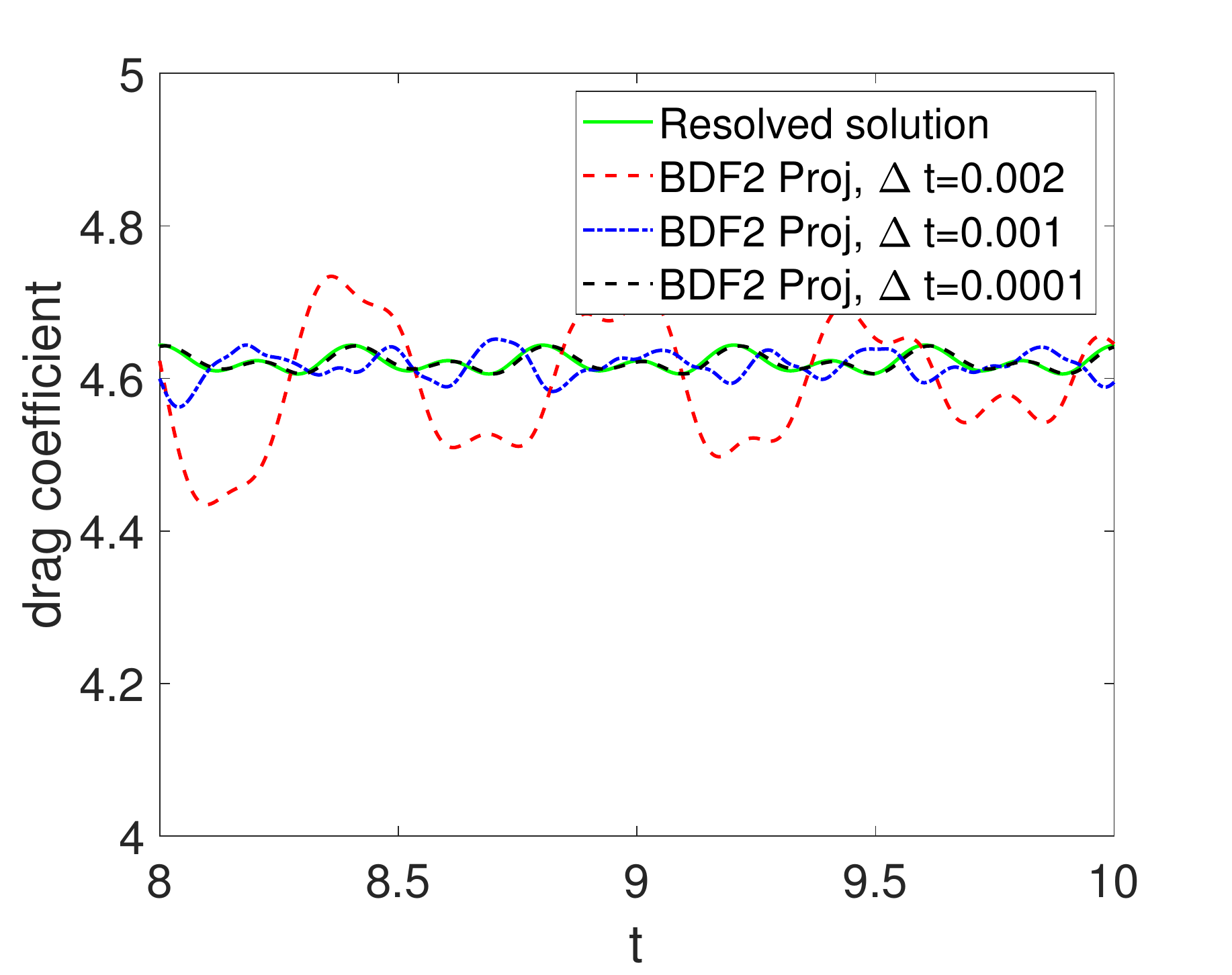}
	\includegraphics[width = .48\textwidth, height=.35\textwidth,viewport=0 0 500 430, clip]{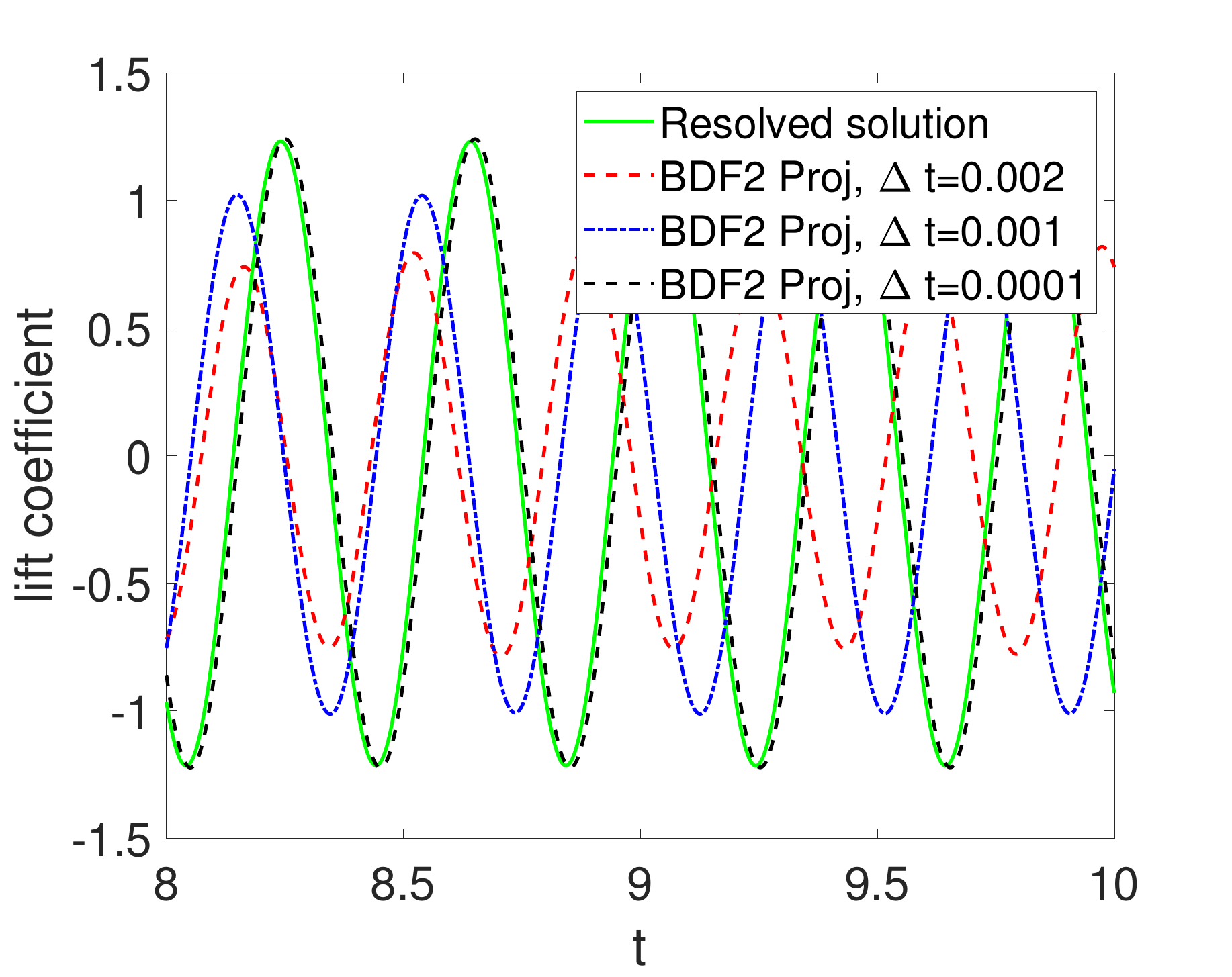}

	\caption{Shown above are the drag (left) and lift(right) coefficients versus time for backward Euler projection method (top) and BDF2 projection method (bottom) with varying time step $\dt$ and no CDA.}\label{endraglift_BE_proj}
\end{figure}

Next we consider the CDA projection methods, with nudging parameter $\mu=1000$ and time step size $\Delta t=0.002$ (far larger than what is needed to match the resolved solution when no CDA is used), with varying number of data measurement points $N^2$. In figure \ref{endraglift_BD_projDA}, we observe that BE Projection is nearly as accurate as the resolved solution only when $N=61$.  BDF2 Projection, on the other hand, is accurate even when $N=21$.  In all cases, CDA provides significant improvement in accuracy, and with BDF2 can provide results as good as the resolved solution with a more reasonable number of measurement points than BE projection requires.

\begin{figure}[H]
	\centering
	BE Projection CDA \\
	\includegraphics[width = .48\textwidth, height=.35\textwidth,viewport=0 0 500 430, clip]{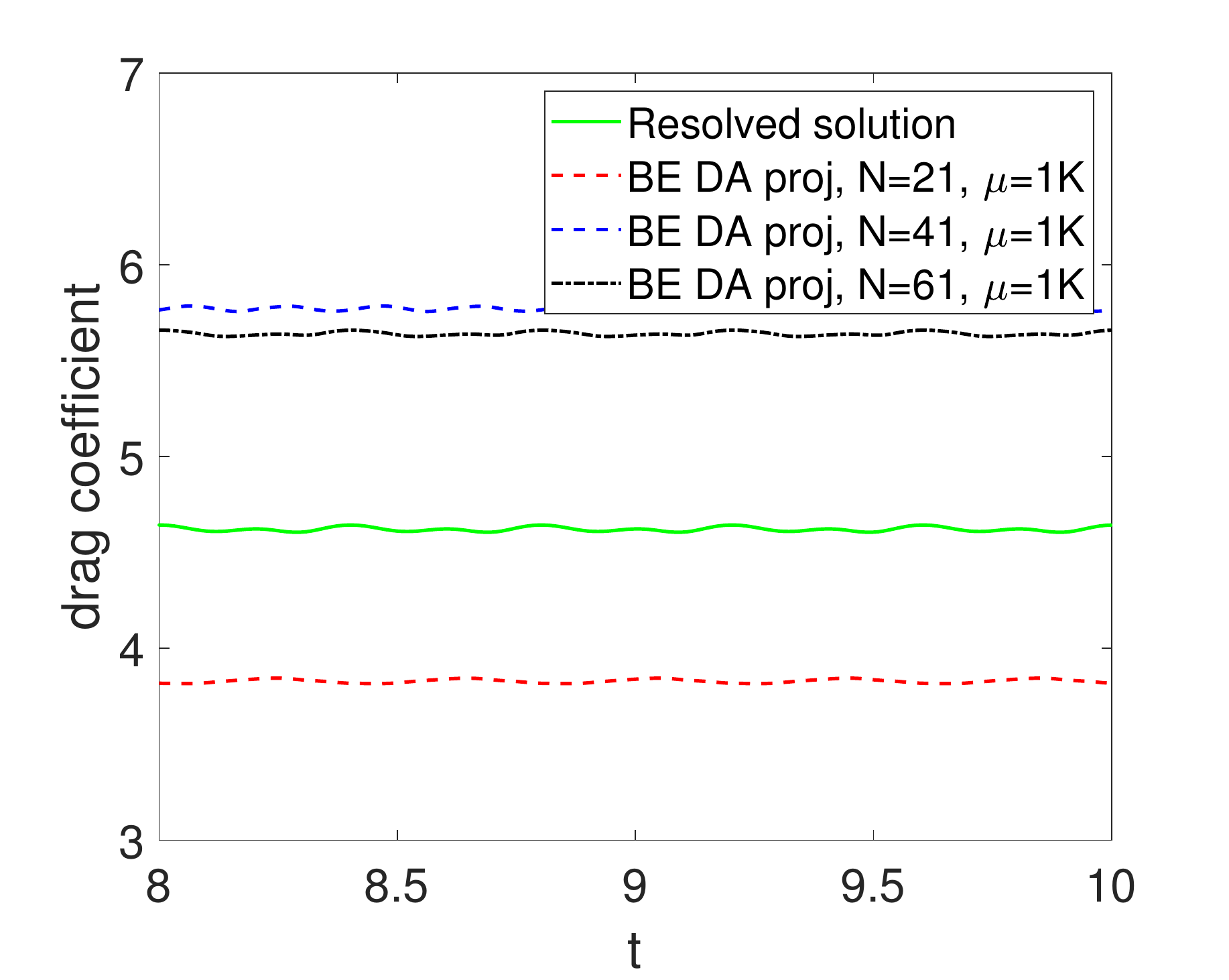}
	\includegraphics[width = .48\textwidth, height=.35\textwidth,viewport=0 0 500 430, clip]{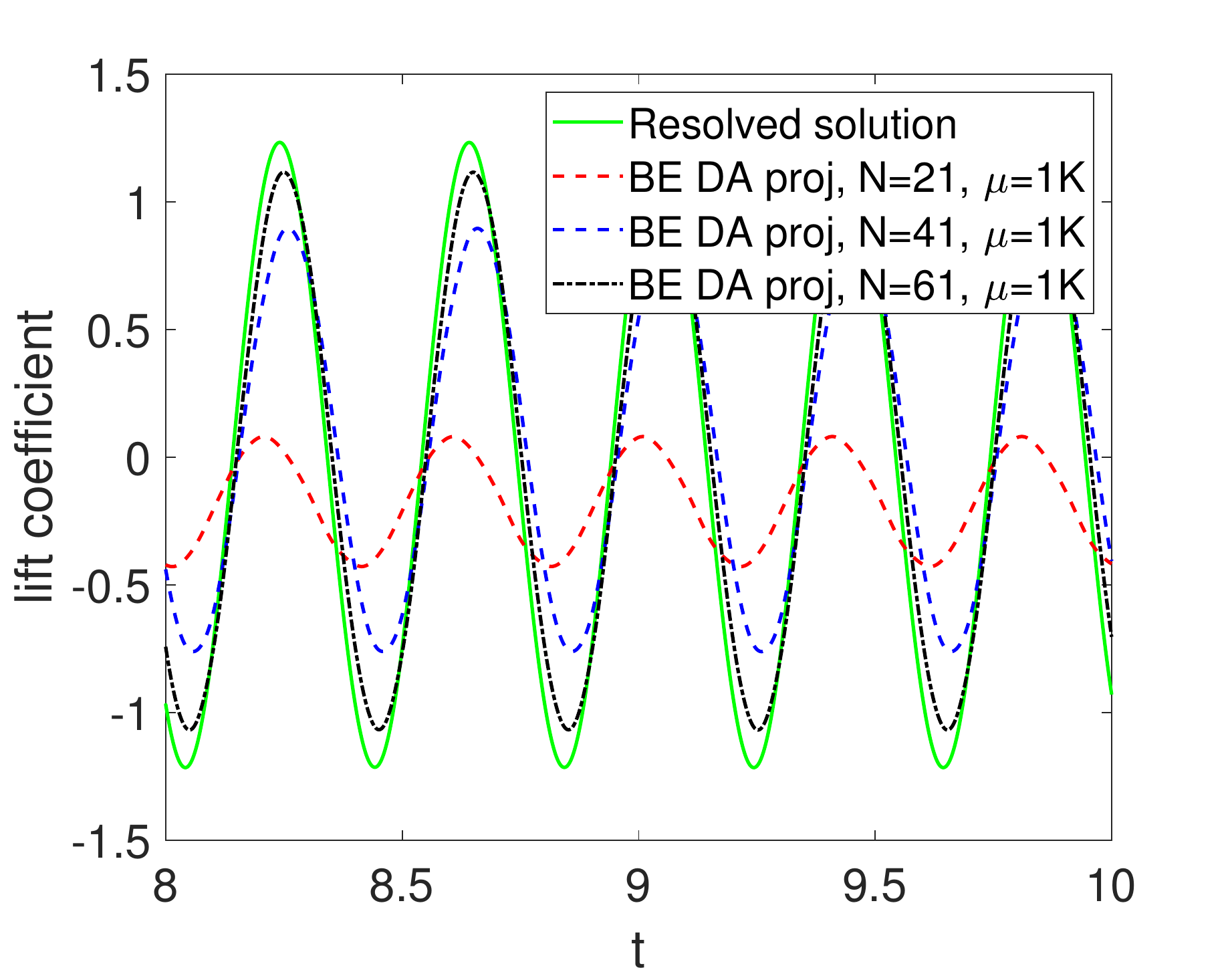}\\
	BDF2 Projection CDA \\
	\includegraphics[width = .48\textwidth, height=.35\textwidth,viewport=0 0 500 430, clip]{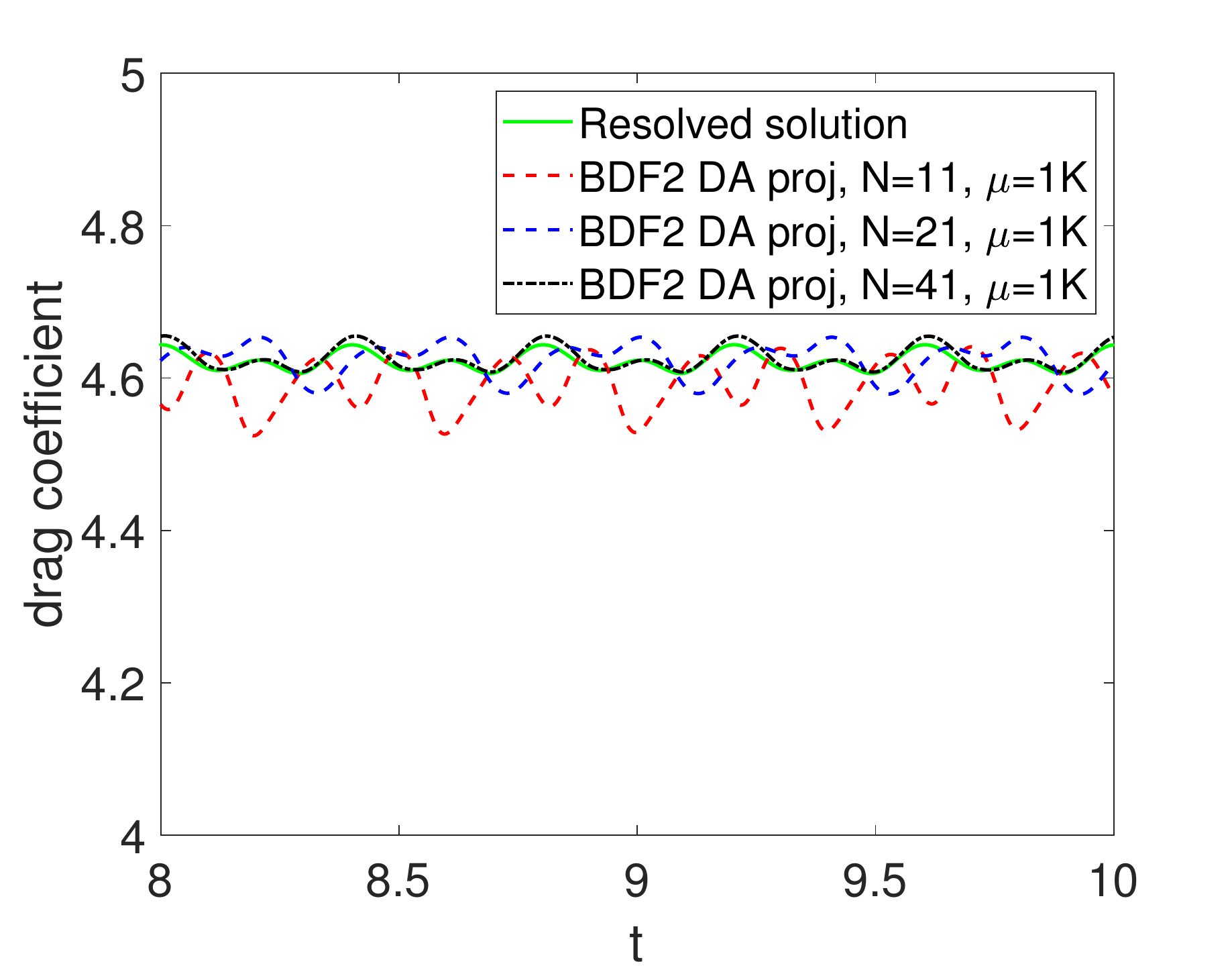}
	\includegraphics[width = .48\textwidth, height=.35\textwidth,viewport=0 0 500 430, clip]{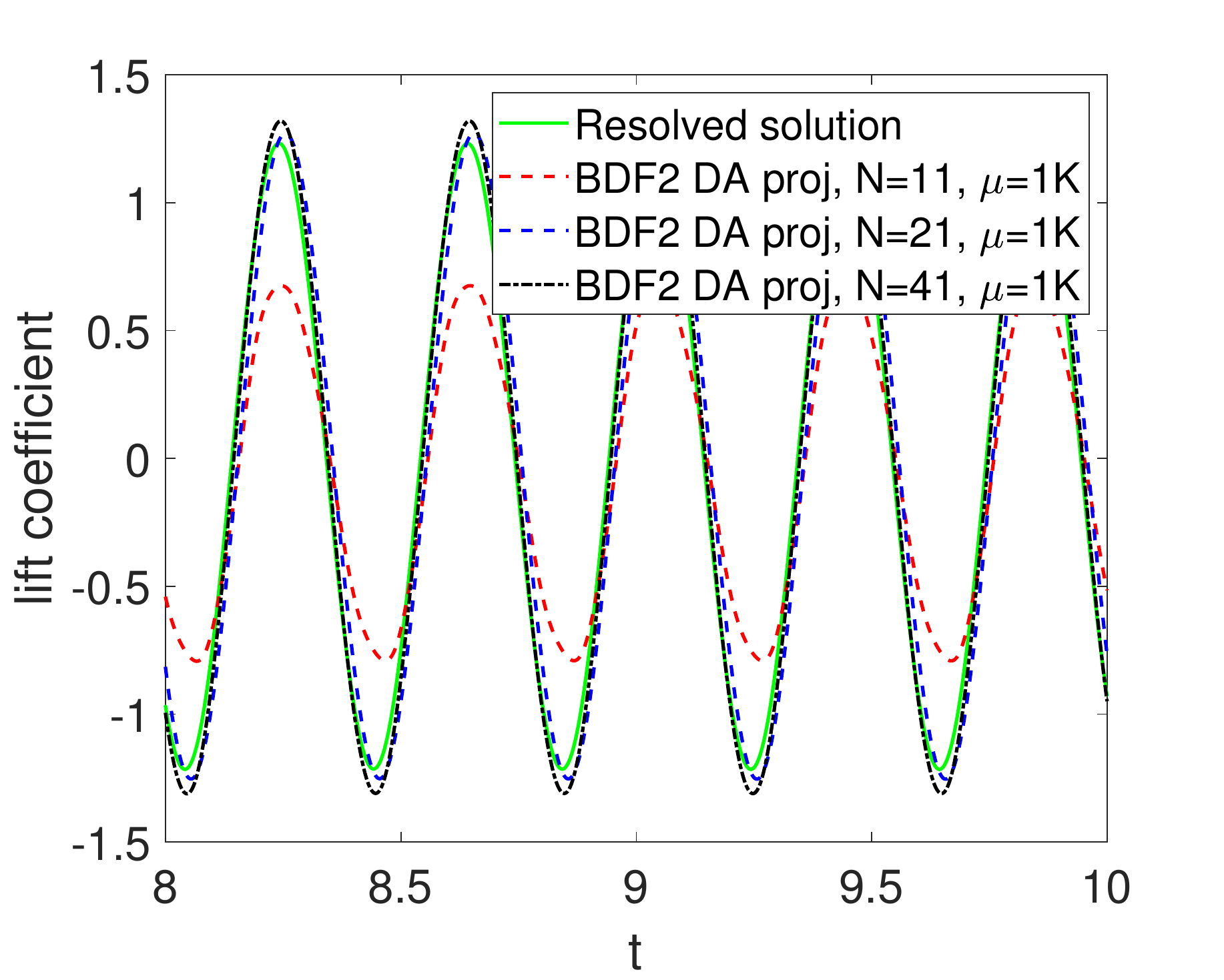}	
	
	\caption{Shown above are the drag(left) and lift(right) coefficients versus time for CDA Projection methods with varying $N$ and $\mu=1000$.}\label{endraglift_BD_projDA}
\end{figure}

\subsubsection{Penalty method with data assimilation using Backward Euler time stepping}
 
We now repeat the tests done with CDA Projection for CDA penalty, now using $\mu=10$ (larger $\mu$ did not improve results).  Results for lift and drag are shown in
figure \ref{endraglift_BD_penDA}, and we observe similar results as for CDA Projection: for BE Penalty, $N=61$ is required to achieve accuracy near that of the resolved solution and for BDF2 Penalty $N=41$ is needed.

\begin{figure}[H]
	\centering
	Backward Euler Penalty CDA \\
	\includegraphics[width = .48\textwidth, height=.35\textwidth,viewport=0 0 500 390, clip]{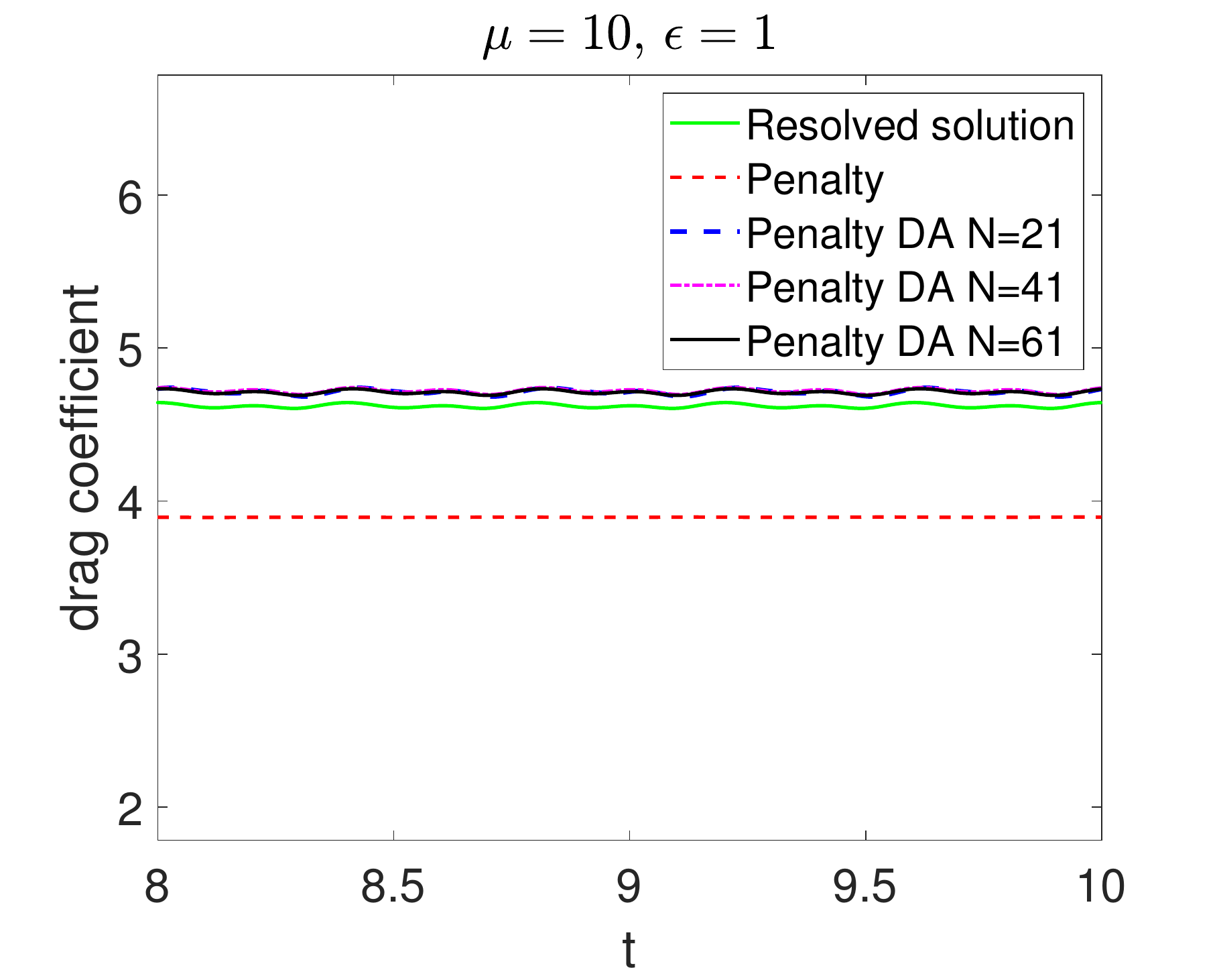}
	\includegraphics[width = .48\textwidth, height=.35\textwidth,viewport=0 0 500 390, clip]{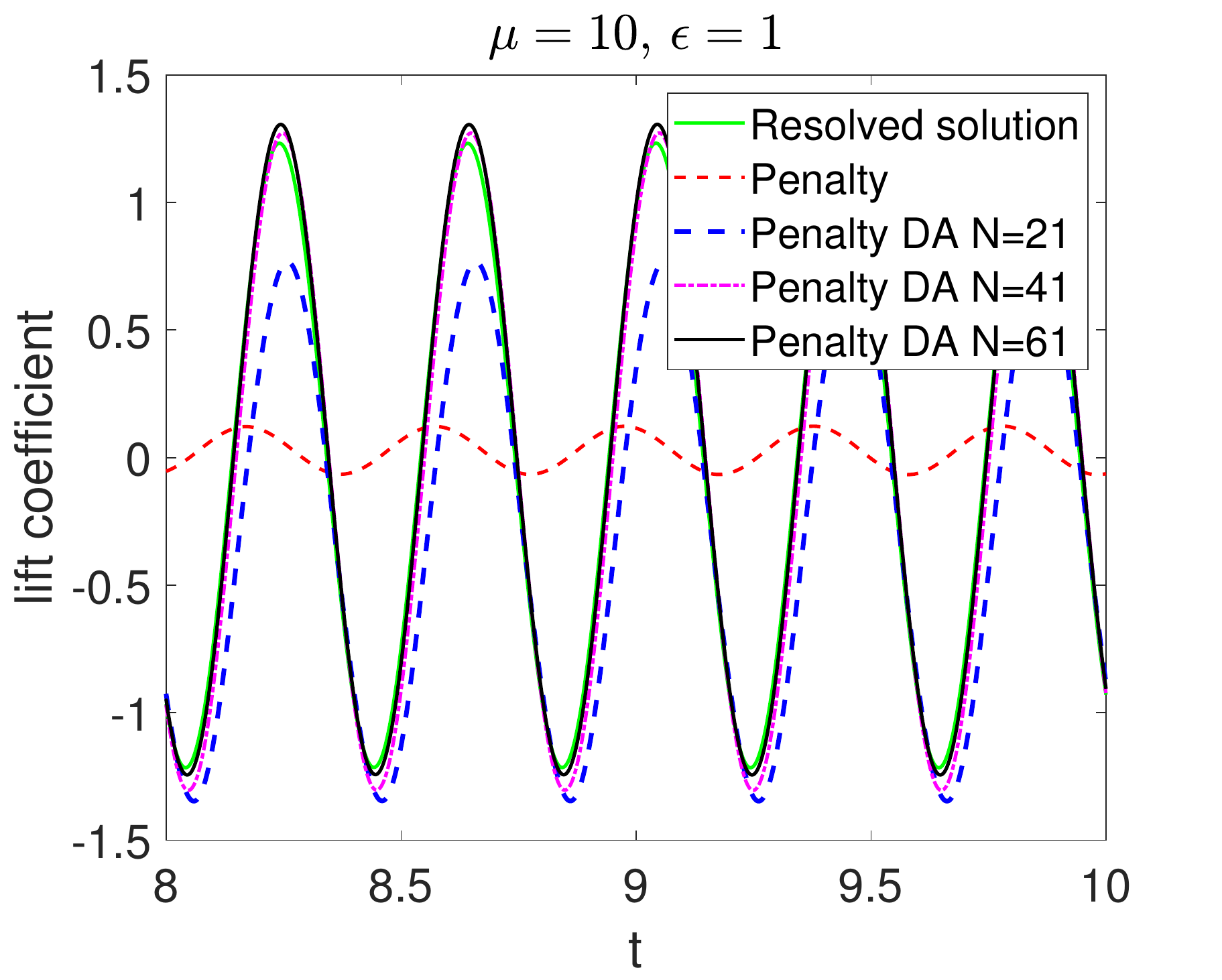}
	BDF2 Penalty CDA\\
	\includegraphics[width = .48\textwidth, height=.35\textwidth,viewport=0 0 500 390, clip]{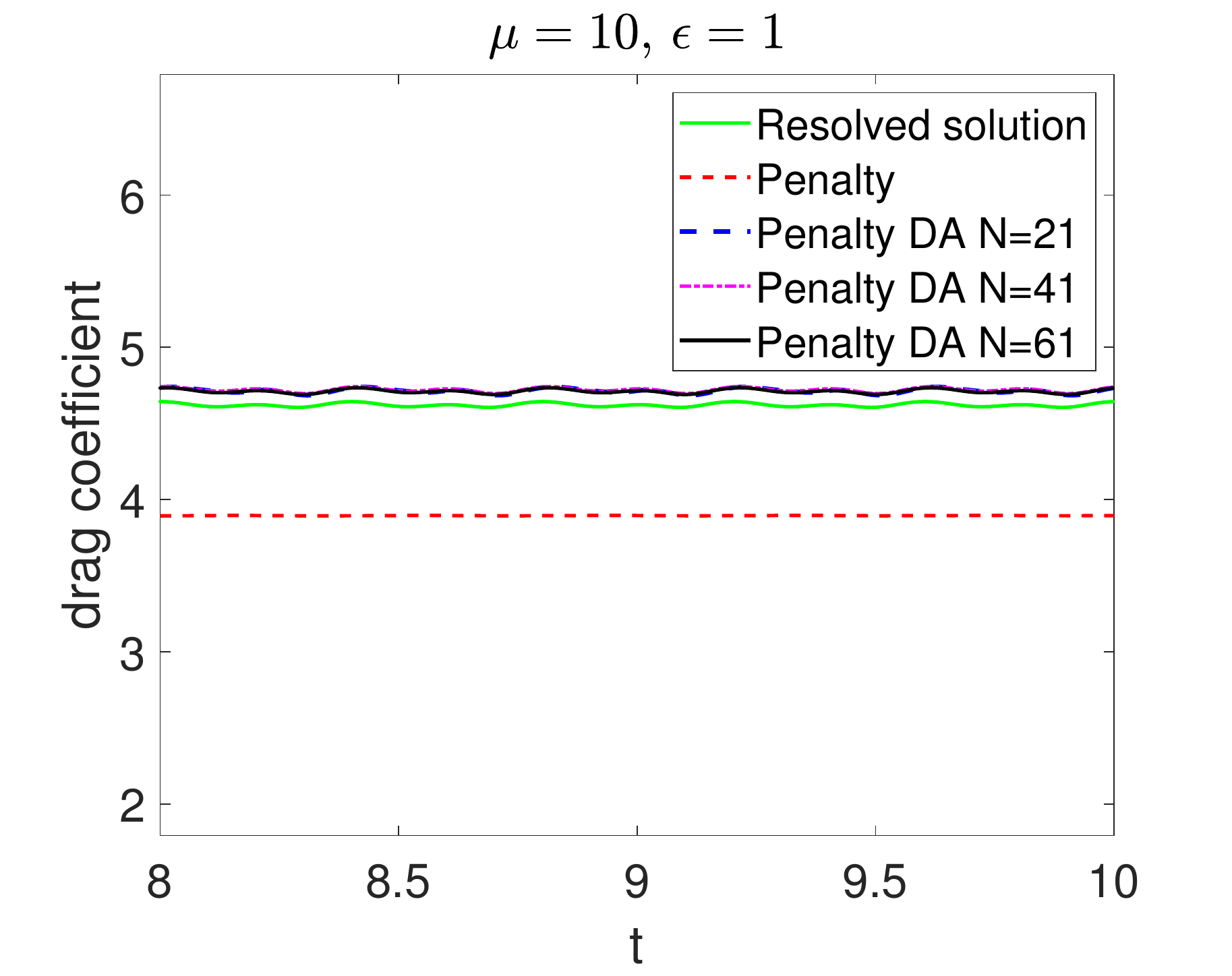}
	\includegraphics[width = .48\textwidth, height=.35\textwidth,viewport=0 0 500 390, clip]{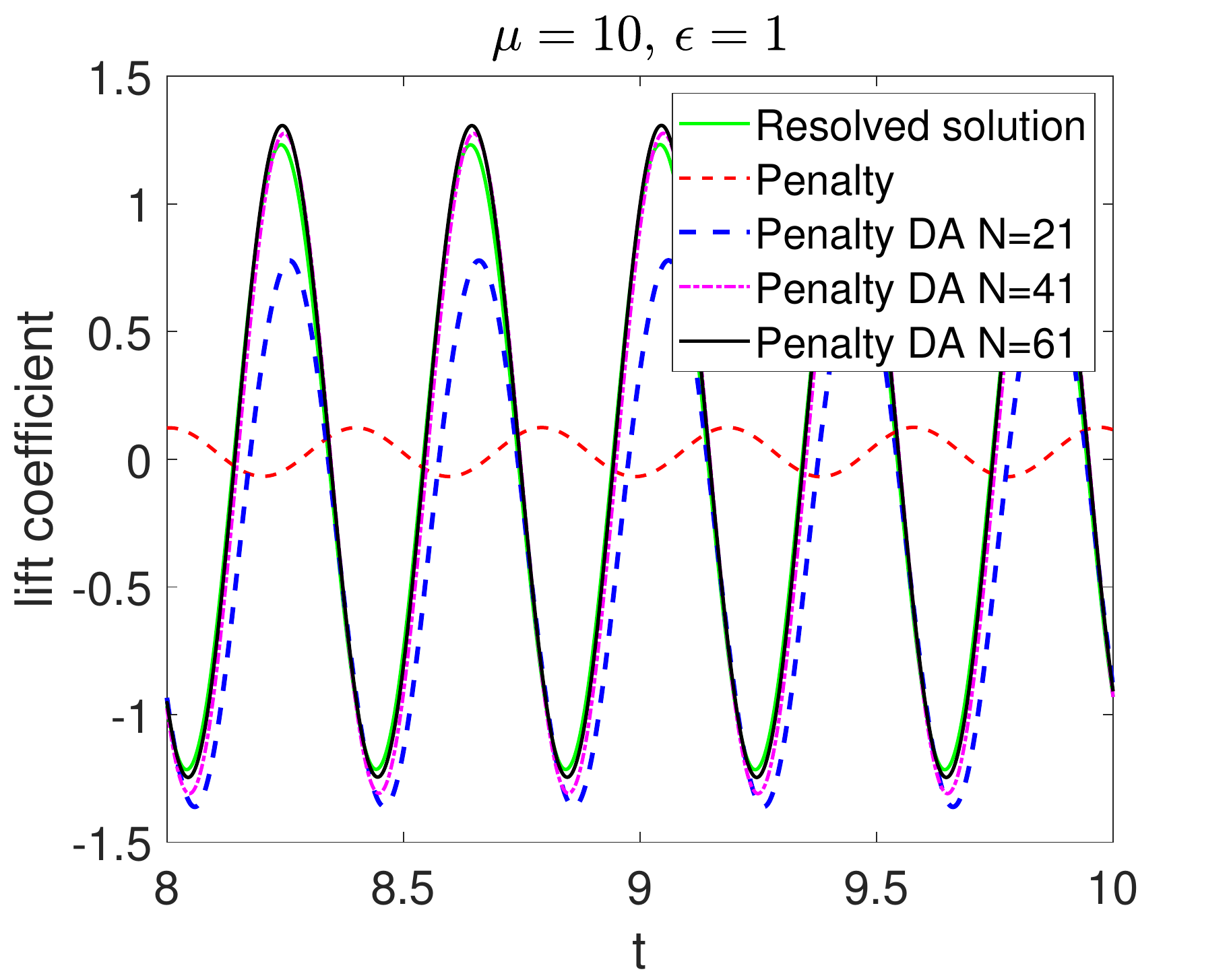}
	\caption{Shown above are the drag(left) and lift(right) coefficients versus time for CDA Penalty methods with varying $N$ and $\mu=10$ and $\eps=1$}\label{endraglift_BD_penDA}
\end{figure}

\section{Conclusions}
We studied herein continuous data assimilation (CDA) applied to the projection and penalty methods for the Navier-Stokes equations. We proved that CDA enables long time optimally accurate solutions by removing the splitting error arising in projection methods and the modeling error in penalty methods.  Numerical tests illustrated the theory well, and major improvements in accuracy from CDA were observed.  These tests also showed that CDA can allow for larger time step sizes and larger penalty parameters without harming accuracy.  

For future work, one may consider CDA applied to other types of splitting or approximation methods such as Yosida or ACT algebraic splitting methods for NSE \cite{RX17,SV06,HH02,ViguerieThesis}, to determine if their splitting errors can be reduced as well.

\section{Acknowledgements}
All authors were partially supported by NSF grant DMS 2152623.

\bibliographystyle{plain}
\bibliography{graddiv}

\begin{thebibliography}{10}

\bibitem{AOT14}
A.~Azouani, E.~Olson, and E.~S. Titi.
\newblock Continuous data assimilation using general interpolant observables.
\newblock {\em Journal of Nonlinear Science}, 24:277--304, 2014.

\bibitem{benzi}
M.~Benzi and M.~Olshanskii.
\newblock An augmented {L}agrangian-based approach to the {O}seen problem.
\newblock {\em SIAM J. Sci. Comput.}, 28:2095--2113, 2006.

\bibitem{BCM01}
D.~Brown, R.~Cortez, and M.~Minion.
\newblock Accurate projection methods for the incompressible {Navier-Stokes}
  equations.
\newblock {\em Journal of Computational Physics}, 168:464--499, 2001.

\bibitem{CHL20}
E.~Carlson, J.~Hudson, and A.~Larios.
\newblock Parameter recovery for the 2 dimensional {N}avier--{S}tokes equations
  via continuous data assimilation.
\newblock {\em SIAM Journal on Scientific Computing}, 42(1):A250--A270, 2020.

\bibitem{C68}
A.~J. Chorin.
\newblock Numerical solution of the {Navier-Stokes} equations.
\newblock {\em Math. Comput.}, 22:745--762, 1968.

\bibitem{CLLRW13}
B.~Cousins, S.~Le Borne, A.~Linke, L.~Rebholz, and Z.~Wang.
\newblock { Efficient linear solvers for incompressible flow simulations using
  Scott-Vogelius finite elements}.
\newblock {\em Numerical Methods for Partial Differential Equations},
  29:1217--1237, 2013.

\bibitem{Daley_1993_atmospheric_book}
R.~Daley.
\newblock {\em Atmospheric {D}ata {A}nalysis}.
\newblock Cambridge Atmospheric and Space Science Series. Cambridge University
  Press, 1993.

\bibitem{DR22}
A.~E. Diegel and L.~G. Rebholz.
\newblock Continuous data assimilation and long-time accuracy in a c0 interior
  penalty method for the cahn-hilliard equation.
\newblock {\em Applied Mathematics and Computation}, 424:127042, 2022.

\bibitem{CL21}
A.~Larios E.~Carlson.
\newblock Sensitivity analysis for the 2{D} {N}avier–{S}tokes equations with
  applications to continuous data assimilation.
\newblock {\em J Nonlinear Sci}, 31(84), 2021.

\bibitem{ESW14}
H.~Elman, D.~Silvester, and A.~Wathen.
\newblock Finite elements and fast iterative solvers with applications in
  incompressible fluid dynamics.
\newblock {\em Numerical Mathematics and Scientific Computation}, Oxford, 2014.

\bibitem{FJT15}
A.~Farhat, M.~S. Jolly, and E.~S. Titi.
\newblock Continuous data assimilation for the 2d bénard convection through
  velocity measurements alone.
\newblock {\em Physica D: Nonlinear Phenomena}, 303:59--66, 2015.

\bibitem{FLT16}
A.~Farhat, E.~Lunasin, and E.~S. Titi.
\newblock On the charney conjecture of data assimilation employing temperature
  measurements alone: The paradigm of 3d planetary geostrophic model.
\newblock {\em Mathematics of Climate and Weather Forecasting}, 2(1), 2016.

\bibitem{FLT19}
A.~Farhat, E.~Lunasin, and E.~S. Titi.
\newblock {\em A Data Assimilation Algorithm: the Paradigm of the 3D
  Leray-{$\alpha$} Model of Turbulence}, page 253–273.
\newblock London Mathematical Society Lecture Note Series. Cambridge University
  Press, 2019.

\bibitem{FMSW21}
P.~Farrell, L.~Mitchell, L.R. Scott, and F.~Wechsung.
\newblock A {R}eynolds-robust preconditioner for the {S}cott-{V}ogelius
  discretization of the stationary incompressible {N}avier-{S}tokes equations.
\newblock {\em SMAI Journal of Computational Mathematics}, 7:75--96, 2021.

\bibitem{GN20}
B.~Garcia-Archilla and J.~Novo.
\newblock Error analysis of fully discrete mixed finite element data
  assimilation schemes for the {N}avier-{S}tokes equations.
\newblock {\em Advances in Computational Mathematics}, pages 46--61, 2020.

\bibitem{GNT18}
B.~Garcia-Archilla, J.~Novo, and E.~Titi.
\newblock Uniform in time error estimates for a finite element method applied
  to a downscaling data assimilation algorithm.
\newblock {\em SIAM Journal on Numerical Analysis}, 58:410--429, 2020.

\bibitem{GMS06}
J.~Guermond, P.~Minev, and J.~Shen.
\newblock An overview of projection methods for incompressible flows.
\newblock {\em Computer Methods in Applied Mechanics and Engineering},
  195:6011--6045, 2006.

\bibitem{GMS12}
J.-L. Guermond, P.D. Minev, and A.J. Salgado.
\newblock Convergence analysis of a class of massively parallel direction
  splitting algorithms for the {N}avier-{S}tokes equations in simple domains.
\newblock {\em Math. Comp.}, 81(280):1951--1977, 2012.

\bibitem{HR13}
T.~Heister and G.~Rapin.
\newblock Efficient augmented {L}agrangian-type preconditioning for the {O}seen
  problem using grad-div stabilization.
\newblock {\em Int. J. Numer. Meth. Fluids}, 71:118--134, 2013.

\bibitem{HH02}
M.~Henriksen and J.~Holmen.
\newblock Algebraic splitting for incompressible {Navier-Stokes} equations.
\newblock {\em Journal of Computational Physics}, 175:438--453, 2002.

\bibitem{IMT20}
A.~H. Ibdah, C.~F. Mondaini, and E.~S. Titi.
\newblock {Fully discrete numerical schemes of a data assimilation algorithm:
  uniform-in-time error estimates}.
\newblock {\em IMA Journal of Numerical Analysis}, 40(4):2584--2625, 11 2019.

\bibitem{Kalnay_2003_DA_book}
E.~Kalnay.
\newblock {\em Atmospheric {M}odeling, {D}ata {A}ssimilation and
  {P}redictability}.
\newblock Cambridge University Press, 2003.

\bibitem{LRZ19}
A.~Larios, L.~Rebholz, and C.~Zerfas.
\newblock Global in time stability and accuracy of {IMEX-FEM} data assimilation
  schemes for {Navier-Stokes} equations.
\newblock {\em Computer Methods in Applied Mechanics and Engineering},
  345:1077--1093, 2019.

\bibitem{Law_Stuart_Zygalakis_2015_book}
K.~Law, A.~Stuart, and K.~Zygalakis.
\newblock {\em A {M}athematical {I}ntroduction to {D}ata {A}ssimilation},
  volume~62 of {\em Texts in Applied Mathematics}.
\newblock Springer, Cham, 2015.

\bibitem{DMB20}
P.~C.~Di Leoni, A.~Mazzino, and L.~Biferale.
\newblock Synchronization to big data: nudging the {N}avier-{S}tokes equations
  for data assimilation of turbulent flows.
\newblock {\em Physical Review X}, 10(011023), 2020.

\bibitem{LNRW17}
A.~Linke, M.~Neilan, L.~Rebholz, and N.~Wilson.
\newblock A connection between coupled and penalty projection timestepping
  schemes with {FE spacial discretization for the Navier-Stokes equations}.
\newblock {\em Journal of Numerical Mathematics}, 25(4):229--248, 2017.

\bibitem{OR11}
M.~Olshanskii and L.~Rebholz.
\newblock Application of barycenter refined meshes in linear elasticity and
  incompressible fluid dynamics.
\newblock {\em Electronic Transactions on Numerical Analysis. Copyright ©},
  38:258--274, 01 2011.

\bibitem{OT14}
M.A. Olshanskii and E.E. Tyrtyshnikov.
\newblock {\em Iterative Methods for Linear Systems: Theory and Applications}.
\newblock SIAM, Philadelphia, 2014.

\bibitem{P97}
A.~Prohl.
\newblock {\em Projection and quasi-compressibility methods for solving the
  incompressible {Navier-Stokes} equations}.
\newblock Teubner-Verlag, Stuttgart, 1997.

\bibitem{RX17}
L.~Rebholz and M.~Xiao.
\newblock Improved accuracy in algebraic splitting methods for {Navier-Stokes}
  equations.
\newblock {\em SIAM Journal on Scientific Computing}, 39, 2017.

\bibitem{RZ21}
L.~G. Rebholz and C.~Zerfas.
\newblock Simple and efficient continuous data assimilation of evolution
  equations via algebraic nudging.
\newblock {\em Numerical Methods for Partial Differential Equations},
  37(3):2588--2612, 2021.

\bibitem{R97}
W.~Rodi.
\newblock Comparison of les and rans calculations of the flow around bluff
  bodies.
\newblock {\em Journal of Wind Engineering and Industrial Aerodynamics},
  69-71:55--75, 1997.
\newblock Proceedings of the 3rd International Colloqium on Bluff Body
  Aerodynamics and Applications.

\bibitem{SV06}
F.~Saleri and A.~Veneziani.
\newblock Pressure correction algebraic splitting methods for the
  incompressible {Navier-Stokes} equations.
\newblock {\em SIAM Journal on Numerical Analysis}, 43:174--194, 2006.

\bibitem{ST96}
M.~Sch$\ddot{\mbox{a}}$fer and S.~Turek.
\newblock The benchmark problem `flow around a cylinder' flow simulation with
  high performance computers {II}.
\newblock {\em in E.H. Hirschel (Ed.), Notes on Numerical Fluid Mechanics}, 52,
  Braunschweig, Vieweg:547--566, 1996.

\bibitem{Shen92}
J.~Shen.
\newblock On error estimates of projection methods for{N}avier-{S}tokes
  equations: First-order schemes.
\newblock {\em SIAM Journal on Numerical Analysis}, 29(1):57--77, 1992.

\bibitem{S92}
J.~Shen.
\newblock On error estimates of some higher order projection and
  penalty-projection methods for {Navier-Stokes equations}.
\newblock {\em Numerische Mathematik}, 62:49--73, 1992.

\bibitem{S95}
Jie Shen.
\newblock On error estimates of the penalty method for unsteady navier–stokes
  equations.
\newblock {\em SIAM Journal on Numerical Analysis}, 32(2):386--403, 1995.

\bibitem{SDN99}
A.~Sohankar, L.~Davidson, and C.~Norberg.
\newblock {Large Eddy Simulation of Flow Past a Square Cylinder: Comparison of
  Different Subgrid Scale Models }.
\newblock {\em Journal of Fluids Engineering}, 122(1):39--47, 11 1999.

\bibitem{T68}
R.~Temam.
\newblock Une m\'ethode d'approximation de la solution des \'equations de
  {Navier-Stokes}.
\newblock {\em Bulletin de la Soci\'et\'e Math\'ematique de France},
  96:115--152, 1968.

\bibitem{T69}
R.~Temam.
\newblock Sur l'approximation de la solution des equations de {Navier-Stokes}
  par la methode des pas fractionnaires{ (II)}.
\newblock {\em Arch. Rational Mech. Anal.}, 33:377--385, 1969.

\bibitem{TGO15}
F.X. Trias, A.~Gorobets, and A.~Oliva.
\newblock Turbulent flow around a square cylinder at reynolds number 22,000: A
  dns study.
\newblock {\em Computers \& Fluids}, 123:87--98, 2015.

\bibitem{ViguerieThesis}
A.~Viguerie.
\newblock {\em Efficient, stable, and reliable solvers for the steady
  incompressible {N}avier-{S}tokes equations in computational hemodynamics.}
\newblock Ph.{D}., Emory University, 2018.

\end{thebibliography}
%\bibliography{paperbib2}

\end{document}